\theoremstyle{plain}
\newtheorem{thm}{Theorem}[section]
\newtheorem{lem}[thm]{Lemma}
\newtheorem{prop}[thm]{Proposition}
\newtheorem{conj}[thm]{Conjecture}
\theoremstyle{definition}
\newtheorem{dfn}[thm]{Definition}
\newtheorem{ex}[thm]{Example}
\def\dnfo{\;\raise.2em\hbox{$\mathrel|\kern-.9em\lower.4em\hbox
{$\smile$}$}}
\def\dnf#1{\lower.9em\hbox{$\buildrel\dnfo\over{ \scriptstyle  #1}$}}
\def\dfo{\;\raise.2em\hbox{$\mathrel|\kern-.9em\lower.4em\hbox{$\smile$}
\kern-.72em\lower.07em\hbox{\char'57}$}\;}
\def\df#1{\lower1em\hbox{$\buildrel\dfo\over{\scriptstyle #1}$}}
\newcommand{\la}{\langle}
\newcommand{\ra}{\rangle}
\newcommand{\sA}{\mathcal{A}}
\newcommand{\sB}{\mathcal{B}}
\newcommand{\sC}{\mathcal{C}}
\newcommand{\sD}{\mathcal{D}}
\newcommand{\sE}{\mathcal{E}}
\newcommand{\C}{\mathbb{C}}
\newcommand{\ov}{\overrightarrow}
\title{Generically Computable Equivalence Structures and Isomorphisms}
\thanks{This research was partially supported by the National Science Foundation SEALS grant NSF DMS-1362273.  The work was done partially while the latter two authors were visiting the Institute for Mathematical Sciences, National University of Singapore, in 2017. The visits were supported by the Institute.  Harizanov was partially supported by the Simons Foundation Collaboration Grant and by CCFF and Dean's Research Chair Award of the George Washington University.}
\author{Wesley Calvert}
\address{Department of Mathematics\\ Mail Code 4408\\
Southern Illinois University, Carbondale\\
1245 Lincoln Drive\\
Carbondale, Illinois 62901}
\email{wcalvert@siu.edu}
\author{Douglas Cenzer}
\email{cenzer@ufl.edu}
\author{Valentina S.\ Harizanov}
\email{harizanv@gwu.edu}
\date{\today}
\begin{document}

\begin{abstract}

We define notions of generically and coarsely computable relations and structures and functions between structures. 
We investigate the existence and uniqueness of equivalence structures in the context of these defintions.  

\end{abstract}

\maketitle

Many results in computable structure theory tend to depend sensitively on the construction of adversarial (and frequently \emph{ad hoc}) examples.  As a well-known example, a standard construction of a finitely presented group with unsolvable word problem \cite{Rotman} involves not just getting the right example of a group; the particular words within this group on which it is difficult to decide equality to the identity are very special words (and are even called by this term in some expositions).  In another well-known example from complexity theory, the simplex algorithm is known to have exponential complexity in the worst case, but empirically runs in much shorter time on practically all inputs.

It would be worthwhile to distinguish which results in computable structure theory depend on a ``special'' (and potentially extremely rare) input, and which are less sensitive.  To do this job in the context of word problems on groups, Kapovich, Myasnikov, Schupp, and Shpilrain proposed using notions of asymptotic density to state whether a partial recursive function could solve ``almost all'' instances of a problem \cite{KMSS}.

Jockusch and Schupp \cite{JS12} generalized this approach to the broader context of computability theory in the following way.
\begin{dfn}
  Let $S \subseteq \mathbb{N}$.
  \begin{enumerate}
  \item The density of $S$ up to $n$, denoted by $\rho_n(S)$, is given by \[\frac{\left|S \cap \{0, 1, 2, \dots, n\}\right|}{n+1}.\]
  \item The asymptotic density of $S$, denoted by $\rho(S)$, is given by $\lim\limits_{n \to \infty}\rho_n(S)$.
  \end{enumerate}
\end{dfn}

A set $A$ is said to be \emph{generically computable} if and only if there is a partial computable function $\phi$ such that $\phi$ agrees with $\chi_A$ throughout the domain of $\phi$, and such that the domain of $\phi$ has asymptotic density 1.  A set $A$ is said to be \emph{coarsely computable} if and only if there is a \emph{total} computable function $\phi$ that agrees with $\chi_A$ on a set of asymptotic density 1.

The study of generically and coarsely computable sets and some related notions has led to an interesting program of research in recent years; see \cite{JS17} for a partial survey.  The purpose of the present paper is to examine notions of generically and coarsely computable functions, relations, and structures and to present some results for equivalence structures and isomorphisms. 

Now it seems natural to say that a function $f: \omega^n \to \omega$ is generically computable if  there is a partial computable function $\phi$ such that $\phi = f$ on the domain of $\phi$ and such that the domain of $\phi$ has asymptotic density one. We will discuss in some detail below the notion of density for subsets of $\omega^n$. Given a structure $\sA$ with universe $\omega$, and with functions $\{f_i: i \in I\}$, each $f_i$ of arity $p_i$ and relations $\{R_j: j \in J\}$, each $R_i$ of arity $r_j$, we want to propose that $\sA$ is generically computable if there is a computably enumerable set $D$ of asymptotic density one, and partial computable functions $\{\phi_i: i \in I\}$ and $\psi_j$ such that each $\phi_i$ agrees with $f_i$ on $D^{p_i}$ and each $\psi_j$ agrees with $\chi_{R_j}$ on the set $D^{r_j}$.  We will present a number of variations on this theme. 

We will also consider versions of coarsely computable functions and structures.  Generalizing from the characteristic function of a set, we say that a function $f$ is coarsely computable if there is a computable function $\phi$ such that $f$ and $\phi$ agree on a set of asymptotic density one.  For a structure $\sA$, we want to say that $\sA$ is coarsely computable if there is a computable structure $\sC$ and a set $D$  of asymptotic density one such that the functions and relations of $\sA$ and $\sC$ agree on $D$. We introduce an intermediate notion of being \emph{strongly generically computable} which requires that the dense set $D$ is computably enumerable. When we examine equivalence structures, we will require that the computable structure $\sC$ also be an equivalence structure. 

Finally, we consider generically and coarsely computable isomorphisms. We will say that structures $\sA$ and $\sB$ are \emph{generically computably isomorphic} if there is an isomorphism $f: \sA \to \sB$ and a partial computable $\theta$ such that both the domain and range of $\theta$ have asymptotic density one, and $f(x) = \theta(x)$ whenever $\theta(x)$ is defined.  A bijection $f: \sA \to \sB$ is said to be a \emph{weakly coarsely computable isomorphism} if there is a total computable $\theta$ and a set $C$ of density one such that 
\begin{itemize}
\item[(i)] $C$ is the universe of a substructure $\sC$ of $\sA$;
\item[(ii)] $f(x) = \theta(x)$ for all $x \in C$;
\item[(iii)] $f[C]$ has asymptotic density one;
\item[(iv)] $\theta$ is a structural isomorphism from $\sC$ to its image.   
\end{itemize}
If the bijection $f$ is itself a structural isomorphism, then $f$ is said to be a \emph{coarsely computable isomorphism}.

These notions prove quite interesting for equivalence structures.  Effective notions of equivalence relations and isomorphisms have been well-studied in recent years. See for example \cite{CHR11,CCHM06,KT09,Marshall,DMN}.  Equivalence structures may be characterized by the number of equivalence classes of each cardinality. The character $\chi(\sE)$ gives the number of classes of size $k$ for each finite $k$. We will examine in some detail the notions of a generically computable and a coarsely computable structure for equivalence relations. 
A key example from computable model theory is the $(1,2)$-structure, consiting of infinitely many classes of size one and infinitely many classes of size 2. The elements of $\sA$ belonging to classes of size 2 form a c.e. set $\sA(2)$ and the elements belonging to classes of size 1 form a co-c.e. set $\sA(1)$.  There are $(1,2)$-structures $\sA$ and $\sB$ such that $\sA(1)$ is computable but $\sB(1)$ is not computable and therefore these structures are not computably isomorphic.  We will say that an equivalence structure has \emph{generic character} $\{k\}$ if $\sA(k)$ has asymptotic density one. We will show that if $\sA$ and $\sB$ are computable $(1,2)$-structures each having generic character $\{2\}$, then they are generically computably isomorphic.  However there are computable $(1.2)$-structures each having generic character $\{1\}$ which are not generically computably isomorphic, although they will be coarsely computably isomorphic. 
We also consider $(1,2)$-structures in which the asymptotic density of $\sA(k)$ is some computable number between 0 and 1. 

The outline of this paper is as follows.  Section 1 contains the needed definitions and some key lemmas. We show that a set $A$ has asymptotic density $\delta$ if and only if the set $A \times A$ has density $\delta^2$ in $\omega \times \omega$. We show that there is a computable dense set $C \subset \omega \times \omega$ such that for any infinite computably enumerable set $A$, the product $A \times A$ is not a subset of $C$. We extend the lemma from \cite{CCHM06} to show that any computably enumerable equivalence relation on a computably enumerable set, with no infinite equivalence classes and with unbounded character, possesses an $s_1$-function (a technical auxilliary that is frequently useful in this area, which we will define). 

 Section 2 presents definitions and results for generically computable structures, in parictular for equivalence structures. We will say that a binary relation $R$ is \emph{generically computable} if there is a partial computable function $\phi: \omega \times \omega \to 2$ such that $\phi = \chi_R$ on the domain of $\phi$ and there is a computably enumerable set $A$ of asymptotic density one such that $A \times A \subseteq Dom(\phi)$. We present the surprising result that \emph{every} equivalence relation has a generically computable copy.  We will say that a set $A$ is $R$-faithful for an equivalence relation $R$ if whenever $a \in A$ and either $R(a,b)$ or $R(b,a)$, then $b \in A$; then we say that $R$ is \emph{faithfully generically computable} if the computably enumerable set $A$ above is $R$-faithful. We characterize the equivalence structures which have faithfully generically computable copies in several ways.  

Here is an abbreviated version of the result:

\bigskip

{\bf Theorem}  Let $\sE = (\omega,E)$ be an equivalence structure. Then the following are equivalent:
\begin{enumerate}
\item [(a)] $\sE$ has a faithfully strongly generically computable copy;
\item [(b)] $\sE$ has a faithfully  generically computable copy;
\item [(c)] $\sE$ has an infinite faithful substructure with a computable copy;
\item  [(d)] Either (i) $\sE$ has an infinite equivalence class, or (ii) there is a finite $k$ such that $\sE$ has infinitely many classes of size $k$, or (iii) $\chi(\sE)$ has an infinite $\Sigma^0_2$ subset with an $s_1$-function.  
\end{enumerate}

\medskip

In Section 3, we discuss coarsely computable structures.  We will say that a binary relation $E$ is \emph{coarsely computable} if there is a computable relation $R$ and a set $A$ of asymptotic density one such that $R$ and $E$ agree on $A$. If the set $A$ is $R$-faithful and also $E$-faithful, then $E$ is \emph{faithfully coarsely computable}; for an equivalence relation $E$, we require that $R$ also be an equivalence relation. 
Then every (faithfully) generically computable equivalence structure is also (faithfully) coarsely computable.  We construct a family of examples to show that not every faithfully coarsely computable structure has a faithfully generically computable copy.  We also show that not every equivalence structure has a faithfully coarsely computable copy.

In Section 4, we study generically computable and coarsely computable isomorphisms.  In addition to the results mentioned above, we also prove the following: 

\bigskip

{\bf Theorem} Suppose that $\sA$ and $\sB$ are computable $(1,2)$-equivalence structures with universe $\omega$ such that the asymptotic density of $A(1)$ and $B(1)$ both equal the same computable real $q$.
Then $\sA$ and $\sB$ are weakly coarsely computably isomorphic. 

\section{Background and Preliminaries}

In \cite{JS12}, Jockusch and Schupp give the following
definitions.

\begin{dfn} Let $S \subseteq \omega$.
\begin{enumerate}
\item We say that $S$ is \emph{generically computable} if there  is a partial computable function $\Phi: \omega \to 2$ such that
  $\Phi = \chi_S$ on the domain of $\Phi$, and such that the domain of
  $\Phi$ has asymptotic density 1.
\item We say that $S$ is \emph{coarsely computable} if there is a
  computable set $T$ such that $S \triangle T$ has asymptotic density 0.
\end{enumerate}
\end{dfn}

It was shown in \cite{JS12} that there is a coarsely computable
  computably enumerable\ set which is not generically computable, and a generically
  computable computably enumerable\ set which is not coarsely computable.

The following observations will be useful. Let us say that $A$ has \emph{upper density 1} if $\limsup\limits_n \frac{\left|(A \cap n)\right|}n = 1$. 
Equivalently, there is a sequence $n_0 < n_1 < \dotsb$ such that  $\lim\limits_i \frac{\left|A \cap n_i\right|}{n_i} = 1$. 

\begin{lem} \label{lem1} If $A$ is a  computably enumerable set with upper density one, then $A$ has a  computable subset with upper density one. 
\end{lem}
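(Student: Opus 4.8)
The plan is to build a computable subset $B \subseteq A$ by a ``wait and commit'' construction: partition $\omega$ into a computable sequence of consecutive intervals, and on each interval let $B$ equal whatever part of $A$ has already surfaced in the enumeration by a suitably chosen stage. Localizing each commitment to a bounded interval is the trick that keeps $B$ computable.

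Concretely, I would fix a computable enumeration $A = \bigcup_s A_s$ with each $A_s$ finite. Since $A$ has upper density one we have $\limsup_n |A \cap n|/n = 1$, so for every $k$ and every $N$ there are infinitely many $m > N$ with $|A \cap m| > (1 - \frac{1}{k+1})m$, and, since the $A_s$ exhaust $A$, each such $m$ is eventually witnessed by some stage $s$ with $|A_s \cap m| > (1 - \frac{1}{k+1})m$. Hence the following recursion halts at every step and is computable: set $m_0 = 0$, and given $m_{k-1}$, search in a fixed order for a pair $(s,m)$ with $m > (k+1)m_{k-1}$ and $|A_s \cap m| > (1 - \frac{1}{k+1})m$, then set $m_k := m$ and $F_k := A_s \cap m_k$ for the first such pair. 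This yields a computable strictly increasing sequence $(m_k)$ and finite sets $F_k \subseteq A$. Put $I_k := [m_{k-1}, m_k)$ and $B := \bigcup_{k \ge 1}(F_k \cap I_k)$.

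It then remains to verify three routine points. First, $B \subseteq A$, immediate. Second, $B$ is computable: to test $n \in B$, compute the $m_k$ until $m_k > n$, find the unique $j$ with $n \in I_j$, and answer yes iff $n \in F_j$, a finite set that has already been computed. Third, $B$ has upper density one: since $B \cap m_k \supseteq F_k \cap I_k$ and $|F_k \cap m_k| > (1 - \frac{1}{k+1})m_k$ while $|F_k \cap m_{k-1}| \le m_{k-1} < \frac{m_k}{k+1}$, one gets $|B \cap m_k|/m_k > 1 - \frac{2}{k+1}$, which tends to $1$.

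The only real obstacle is this second point, and it is exactly why the interval partition is needed. Were one to take $B = \bigcup_k F_k$ with the $F_k$ unrestricted, then deciding $n \in B$ could require inspecting infinitely many rounds $k$ with $m_k > n$, since a later round might enumerate a fresh element of $A$ below $n$. Confining round $k$ to act only on $I_k$ reduces membership to a single round; the cost is giving up at most a $\frac{1}{k+1}$ fraction of the density observed at $m_k$, which is harmless and is precisely what the gap condition $m_k > (k+1)m_{k-1}$ buys.
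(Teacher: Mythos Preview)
Your proof is correct and follows essentially the same ``wait and commit on intervals'' strategy as the paper: both build a computable increasing sequence of markers, freeze $B$ on each interval to whatever has been enumerated into $A$ by some chosen stage, and argue this preserves upper density one. The only cosmetic difference is that the paper measures density directly on the interval $[n_k,n)$ (requiring it to exceed $1-2^{-(k+1)}$ there), whereas you measure density on all of $[0,m)$ and compensate with the gap condition $m_k > (k+1)m_{k-1}$; both devices serve the same purpose of making the part of $B$ below $m_{k-1}$ negligible at scale $m_k$.
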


\begin{proof} Suppose that $A$ is a computably enumerable set with upper density 1. Define  computable sequences $n_0,n_1, n_2, \dots$  and $s_0,s_1, s_2, \dots$ as follows. Let $n_0 = s_0= 0$. 
Let $s_1$ be the least $s$ such that, for some $n < s$,  we have $\left|n \cap A_s\right| \geq \frac12 n$, and let $n_1$ be the least such $n$.  Given $n_k$ and $s_k$, 
let $s_{k+1}$ be the least $s$ such that, for some $n$ with $n_k < n < s$, we have $\left|(n - n_k) \cap A_s\right| \geq \frac {2^{k+1} - 1}{2^{k+1}} (n - n_k)$, and let $n_{k+1}$ be the least such $n$. The computable dense set $B \subseteq A$ is defined so that, for each $i$, if $n_k \leq i < n_{k+1}$, then $i \in B \iff i \in A_{n_{k+1}}$. 
It follows from the construction that, for each $k$, the density of $B$ in $\{i: i>n_k\}$ is at least $\frac{2^k - 1}{2^k}$, so that $B$ has upper density 1. 
\end{proof}

In order to study binary relations and the corresponding structures, we need to look at notions such as generic computability for such relations.  

\begin{lem} \label{lem2} Let $A \subset \omega$.  Then $A$ has asymptotic density $\delta$ if and only if $A \times A$ has asymptotic density $\delta^2$ in $\omega \times \omega$. In particular,
$A$ is asymptotically dense in $\omega$ iff $A \times A$ is asymptotically dense in $\omega \times \omega$.
\end{lem}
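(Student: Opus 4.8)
The plan is to prove both directions via the squeeze argument that relates $\rho_n$ of $A$ to $\rho_m$ of $A\times A$ using a nested-square comparison. The natural enumeration of $\omega\times\omega$ is by the Cantor pairing, so densities of subsets of $\omega\times\omega$ are computed relative to the pairing order; the key observation is that it suffices to compare along the square-number stages $m = (n+1)^2-1$, i.e.\ to control $\rho$ of $A\times A$ on the blocks $\{0,\dots,n\}\times\{0,\dots,n\}$, because those blocks are cofinal and the ratio $\rho_{(n+1)^2-1}$ versus $\rho_m$ for $m$ between consecutive squares differs by a vanishing amount.

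First I would record the elementary counting identity: if we write $k=|A\cap\{0,\dots,n\}|$, then the number of pairs of $A\times A$ whose both coordinates lie in $\{0,\dots,n\}$ is exactly $k^2$, and these are precisely the elements of $A\times A$ among the first $(n+1)^2$ elements of $\omega\times\omega$ under the diagonal (or any "square-exhausting") enumeration. Hence along the subsequence $m_n=(n+1)^2-1$ we get $\rho_{m_n}(A\times A) = \bigl(\rho_n(A)\bigr)^2$ exactly. This immediately gives the "only if" direction: if $\rho_n(A)\to\delta$ then $\rho_{m_n}(A\times A)\to\delta^2$, and the full limit $\rho_m(A\times A)\to\delta^2$ follows because for $m$ with $m_{n-1}<m\le m_n$ the count $|A\times A\cap\{0,\dots,m\}|$ changes by at most $m_n-m_{n-1}=O(n)=O(\sqrt m)$ relative values, so $|\rho_m(A\times A)-\rho_{m_n}(A\times A)|\to 0$; here I would use that $\rho_n(A)^2$ is itself a convergent (hence Cauchy) sequence to absorb the two nearby square-stages.

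For the "if" direction, suppose $\rho(A\times A)=\delta^2$ exists; then in particular the subsequence limit $\lim_n \rho_{m_n}(A\times A)=\lim_n \rho_n(A)^2=\delta^2$ exists, so $\rho_n(A)^2\to\delta^2$, whence $\rho_n(A)\to\delta$ (taking the nonnegative square root, which is continuous). The special case about asymptotic density $1$ is then just $\delta=1$, and the "in particular" sentence also follows by monotonicity-type remarks if one only wants the qualitative statement; I would state it as the $\delta=1$ instance of the general equivalence.

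The main obstacle — really the only point requiring care — is making the comparison between a general initial segment $\{0,\dots,m\}$ of the pairing enumeration and the largest square block $\{0,\dots,n\}^2$ inside it fully rigorous: one must check that under the chosen enumeration of $\omega\times\omega$ the "staircase" region between $\{0,\dots,n\}^2$ and $\{0,\dots,n{+}1\}^2$ has size $\Theta(n)$ and that the discrepancy in density it can contribute is $O(1/\sqrt m)\to 0$. This is a routine but slightly fiddly estimate, and I would isolate it as the one computation to carry out in detail; everything else is continuity of $t\mapsto\sqrt t$ and the squeeze theorem. One should also note at the outset that the statement is independent of the particular bijection $\omega\times\omega\to\omega$ used, as long as it enumerates the square blocks $\{0,\dots,n\}^2$ cofinally with $O(n)$-sized gaps — the Cantor pairing being the standard choice.
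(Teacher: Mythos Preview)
The paper's proof is much shorter than what you propose, because it takes the density of a subset $S\subseteq\omega\times\omega$ to be $\lim_n |S\cap(n\times n)|/n^2$, i.e.\ measured against the square boxes directly rather than through any pairing enumeration. With that convention the identity $(A\times A)\cap(n\times n)=(A\cap n)\times(A\cap n)$ gives $\delta(n)=\delta_A(n)^2$ exactly for every $n$, and both directions follow at once from continuity of $t\mapsto t^2$ and of $t\mapsto\sqrt t$ on $[0,1]$. No subsequences, no squeeze, no enumeration.

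Your argument, besides being more elaborate, contains a genuine error. The Cantor pairing is \emph{not} square-exhausting: it enumerates $\omega\times\omega$ by anti-diagonals $\{(i,j):i+j=m\}$, so an initial segment of length $(n+1)^2$ under the Cantor pairing is (up to a partial diagonal) a triangle $\{(i,j):i+j\le k\}$ with $k\approx\sqrt{2}\,n$, not the square $\{0,\dots,n\}^2$. Concretely, the pair $(n,n)$ has Cantor code about $2n^2$, while $(n{+}1,0)$ already has code about $n^2/2$; so the square $\{0,\dots,n\}^2$ is nowhere near an initial segment of the enumeration, and the discrepancy between it and the nearest initial segment is of order $n^2$, not $O(n)$. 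Thus both your sentence ``these are precisely the elements \dots\ under the diagonal enumeration'' and the later remark ``the Cantor pairing being the standard choice'' of a square-exhausting bijection are false, and the squeeze step as written does not apply to that pairing.

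Your outline does go through verbatim if you replace the Cantor pairing by a genuinely square-exhausting bijection (for example one listing the shells $\{(i,j):\max(i,j)=n\}$ in order); in that case it becomes a longer rephrasing of the paper's one-line proof. If you really want the statement for the Cantor pairing, you must run the subsequence-plus-squeeze argument on triangular blocks instead, and then the exact identity $\rho_{m_n}(A\times A)=\rho_n(A)^2$ is lost and has to be replaced by a separate asymptotic computation.
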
  

\begin{proof} Let $\delta_A(n) = \frac{\left|A \cap n\right|}{n}$ and let $\delta(n) =   \frac{\left|(A \times A) \cap (n\times n)\right|}{n^2}$ .
Since  $(A \times A) \cap (n\times n) = (A \cap n) \times (A \cap n)$, it follows that $\left|(A \times A) \cap n\times n\right| = \left|A \cap n\right|^2$
and hence $\delta(n) =  \delta_A(n)^2$.  If $\lim\limits_n \delta_A(n) = \delta$ exists, then $\lim\limits_n \delta(n) = \lim\limits_n \delta_n(A)^2 = \delta^2$. 
Conversely, if $\lim\limits_n \delta(n) = L = \delta^2$ exists, then $\lim\limits_n \delta_A(n) = \lim\limits_n \sqrt{\delta_n(A)} = \sqrt{L} = \delta$.
\end{proof}

A similar result holds for the density of $A^r$ in $\omega ^r$. On the other hand, we have the following. 

\begin{thm} \label{thm1} There is a computable dense $C \subset \omega \times \omega$ such that for any infinite computably enumerable set $A \subset \omega$, the product $A \times A$ is not a subset of $C$. 
\end{thm}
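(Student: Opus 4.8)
The plan is to build $C$ as the complement $C=(\omega\times\omega)\setminus D$ of a set $D$ which I will make computable, of asymptotic density $0$ in $\omega\times\omega$ (so that $C$ is computable and dense), and such that $D\cap(A\times A)\neq\varnothing$ for every infinite computably enumerable $A$. Since the infinite c.e.\ sets are exactly the infinite $W_e$, it suffices to meet, for each $e$, the requirement $R_e$: \emph{if $W_e$ is infinite then $D\cap(W_e\times W_e)\neq\varnothing$}. Each $R_e$ will place at most one pair into $D$ and then be permanently satisfied, so $D$ is morally one pair per acting requirement.

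I would run a stage construction with two side conditions. Fix a fast-growing computable bound, say $H(a,b)=2^{a+b}$; a pair $(a,b)$ is allowed to enter $D$ only at a stage $s\le H(a,b)$. This is what keeps $D$ computable: to decide ``$(a,b)\in D$'' one runs the (finite-search) construction for $H(a,b)$ stages and checks. To keep $D$ sparse, $R_e$ is permitted to use only a pair $(a,b)$ with $\max(a,b)^3\ge e^2$. At stage $s$: find the least $e\le s$ such that $R_e$ is unsatisfied and there is a pair $(a,b)$ with $a\neq b$, $a,b\in W_{e,s}$, $\max(a,b)^3\ge e^2$, and $s\le H(a,b)$; if one exists, put the lexicographically least such pair into $D$ and declare $R_e$ satisfied. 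The construction is injury-free.

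I would then verify three things. \emph{Computability of $D$:} immediate from the bound $H$, as above. \emph{Density:} any pair of $D$ lying in $[0,N)^2$ was contributed by some $R_e$ with $e^2\le N^3$, i.e.\ $e\le N^{3/2}$, so $|D\cap[0,N)^2|\le N^{3/2}=o(N^2)$, whence $\rho(D)=0$ and $C$ is dense. \emph{Each $R_e$-family succeeds:} given infinite c.e.\ $A$, fix $a_0\in A$ and an index $\sigma$ with $W_\sigma=A$; using the $s$-$m$-$n$ theorem, for each $b\in A$ form an index $e=e(b)$ of the program ``print $a_0$; print $b$; run program $\sigma$'', so that $W_e=A$, the pair $\{a_0,b\}$ lies in $W_{e,s}$ after $O(\log b)$ steps, and $e\le c_A\cdot a_0\cdot b$ for a constant $c_A$ depending only on $A$. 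Choosing $b\in A$ large enough (past a threshold depending only on $A$) makes $b^3\ge e^2$ and makes the window $[\,e,\,H(a_0,b)\,]$ much longer than $e$. Throughout that window, if $R_e$ is still unsatisfied it has the legal pair $(a_0,b)$ available; and at every stage of the window where $R_e$ does not act, some $R_{e'}$ with $e'<e$ acts and is thereafter satisfied forever. As there are only $e$ such $e'$, $R_e$ must act inside the window, putting a pair of $A\times A$ into $D$.

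The step I expect to be the real obstacle is reconciling decidability with the diagonalization: decidability forces each $R_e$ to search for only a (computable-in-$e$) bounded time, but a c.e.\ set may enumerate its members arbitrarily slowly, so $R_e$ attached to the ``obvious'' index of $A$ might never see two elements in time. The fix is that we may choose \emph{which} index of $A$ to attack and may hard-wire a convenient pair into a program for $A$ — but hard-wiring a large number inflates the index, which clashes with the sparsity demand $\max(a,b)^3\ge e^2$. Balancing these is why the pair I hard-wire is one fixed \emph{small} element $a_0$ together with one \emph{large} element $b$ chosen afterwards: then $e\lesssim a_0 b$, and taking $b$ large makes $b^3\ge e^2$ hold while still giving $|D\cap[0,N)^2|=o(N^2)$. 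Turning the informal estimates ``$e\le c_A a_0 b$'' and ``$\{a_0,b\}\subseteq W_{e,s}$ for $s=O(\log b)$'' into precise statements under fixed machine conventions is the only genuinely technical point.
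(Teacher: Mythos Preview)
Your approach works in outline, but the one point you flag as ``the only genuinely technical point'' is more of an obstacle than you suggest. Under many standard G\"odel numberings the $s$-$m$-$n$ index $e(b)$ of a program with the constant $b$ hard-wired grows like $b^{c}$ for some $c$ depending on the encoding (the program gains $O(\log b)$ bits, so the index gains a polynomial factor); there is no reason to expect $c<3/2$, and under Cantor-pairing-based $s$-$m$-$n$ one gets $c=2$. Then your sparsity condition $\max(a_0,b)^3\ge e(b)^2$ fails for all large $b$. This is repairable---replace the threshold by $\max(a,b)^k\ge e$ with $k$ chosen after fixing the numbering, or use a numbering (e.g., one based on the pairing $2^e(2n+1)-1$) under which $s^1_1$ is linear in its last argument---but it is a genuine adjustment to the construction, not just a bookkeeping check.

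The paper's proof sidesteps the entire priority/$s$-$m$-$n$ apparatus with a one-line direct definition. For a pair $(a,b)$ with $m=\max(a,b)$, and for each $e<m$, let $n_e$ be the first element $>2^e$ to appear in $W_{e,m}$ (if any); put $(a,b)\in C$ unless $a$ or $b$ equals some such $n_e$. Computability is immediate (the decision depends only on enumerations up to stage $m$). Density is immediate because $n_e>2^e$ forces at most $i$ ``bad'' values below $2^i$, hence $|C\cap[0,2^i)^2|\ge(2^i-i)^2$. And if $W_e$ is infinite, once the true $n_e$ has appeared at stage $s_e$, any further $n\in W_e$ with $n>s_e$ gives $(n_e,n)\notin C$.

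The contrast: you place one witness per requirement into $D$ and must balance decidability (time bound $H$), sparsity (size threshold), and slow enumeration (hence $s$-$m$-$n$ to manufacture fast indices). The paper instead \emph{removes} a row-and-column set determined by a single marked element $n_e$ per index; sparsity comes for free from $n_e>2^e$, decidability from using the max coordinate as the stage bound, and the slow-enumeration problem never arises because once $n_e$ is fixed, \emph{any} large element of $W_e$ works as the second coordinate, with no constraint on when it is enumerated. Your machinery buys nothing here and can be discarded.
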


\begin{proof} Define $C$ as follows. For any pair $(a,b)$ with $max\{a,b\} = m$, proceed as follows. For each $e < m$, look for the first element $n > 2^e$ which has come in by stage $m$; call this $n_e$ if it exists. Then put $(a,b) \in C$, unless either $a = n_e$ or $b = n_e$ for some $e < m$. If $W_e$ is infinite, then it contains some element $n_e > 2^e$ which is the first to come into $W_e$ at some stage $s_e$, and then there will be another $n \in W_e$ which is greater than $s_e$ but $(n_e,n)$ will not be in $C$. The set $C$ is dense since there are at most $i$ elements less than $2^i$ of the form $n_e$ for any $e<i$ so that $C$ contains at least $(2^i - i)^2$ elements out of the $2^{2i}$ possible pairs up to $2^i$.
\end{proof}

We will be studying equivalence relations, so the following definitions are needed. An equivalence structure $\sA = (A,R)$ is simply a set with an equivalence relation $R$ on $A$. 

\begin{dfn} For any equivalence structure $\sA = (A,R)$, the \emph{character} $\chi(\sA)$ of $\sA$ is 
$\{(k,n): \sA \ \text{has at least $n$ equivalence classes of size $k$}\}.$
\end{dfn}

We will sometimes just refer to the character of $R$ when the set $A$ is implicit. 

\begin{dfn} The function $f: \omega^2 \to \omega$ is said to be an \emph{$s_1$-function} if the following hold: 
\begin{enumerate}
\item For every $i$ and $s$, $f(i,s) \leq f(i,s+1)$.
\item For every $i$, the limit $m_i = \lim\limits_{s\to \infty} f(i,s)$ exists. 
\item For every $i$, $m_i < m_{i+1}$. 
\end{enumerate}
\end{dfn}

The character $K$ is said to \emph{possess} the $s_1$-function $f$ if it has an equivalence class of size $m_i$ for each $i$. 
Here are some useful results about the characters of equivalence relations.

The first is a slight improvement of Lemma 2.1(c) of \cite{CHR11}.

\begin{lem} \label{lem3} For any computably enumerable equivalence relation $R$ on a computably enumerable set $A$, the character $\chi(R)$ is a $\Sigma^0_2$ set. 
\end{lem}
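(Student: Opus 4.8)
The plan is to show that $\chi(R)$ is $\Sigma^0_2$ by writing membership of a pair $(k,n)$ as a $\Sigma^0_2$ predicate using the given computable enumerations. Fix computable enumerations $\{A_s\}$ of $A$ and $\{R_s\}$ of $R$ (with $R_s$ a symmetric, reflexive-on-$A_s$ relation approximating $R$). The statement ``$\sA$ has at least $n$ equivalence classes of size $k$'' should be expressed as: there exist $n$ elements $a_1 < \dots < a_n$ of $A$ and a stage $s$ such that, at stage $s$, each $a_i$ already has exactly $k$ distinct $R_s$-relatives among $A_s$, these $k$-element blocks are pairwise $R_s$-disjoint, and moreover \emph{no new relatives ever appear} for any of the $kn$ elements involved. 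The ``no new relatives ever appear'' clause is the universal (over stages, or equivalently over potential witnesses $b$ and stages) part, so the whole thing is $\exists$ (finite data) followed by $\forall$ (a $\Pi^0_1$ condition), hence $\Sigma^0_2$.

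Concretely, I would first introduce notation for the approximations and for $[a]^s_R$, the set of $R_s$-relatives of $a$ that have appeared by stage $s$. Since $R$ is c.e., $[a]^s_R \subseteq [a]^{s+1}_R$ and $[a]_R = \bigcup_s [a]^s_R$; the class $[a]_R$ has size exactly $k$ iff there is a stage $s$ with $|[a]^s_R| = k$ and for all $t \geq s$, $|[a]^t_R| = k$. The latter ``stabilization'' clause is $\Pi^0_1$ (a computable predicate of $t$, universally quantified). Then ``$(k,n) \in \chi(R)$'' becomes: $\exists s\, \exists a_1 < \dots < a_n \in A_s$ such that for each $i$, $|[a_i]^s_R| = k$ and the $[a_i]^s_R$ are pairwise disjoint, AND $\forall t \geq s$ each $|[a_i]^t_R|$ is still $k$ and the blocks are still disjoint. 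Pulling the existential quantifiers out front, this has the form $\exists (\text{finite}) \, \forall t \, (\text{computable})$, which is $\Sigma^0_2$.

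The one subtlety — and the main thing to get right rather than a genuine obstacle — is that I must make sure disjointness of the eventual classes is also captured. Two $R_s$-blocks that look disjoint at stage $s$ could later merge; but if each individual block has already stabilized at size $k$ (the $\Pi^0_1$ clause above, applied to each $a_i$), then in particular no element ever joins $[a_i]_R$, so two stabilized size-$k$ blocks that are disjoint at stage $s$ stay disjoint forever. So the per-block stabilization clause actually does double duty and I can fold disjointness into it, or keep a separate $\forall t$ disjointness clause; either way the complexity stays $\Sigma^0_2$. I would also remark that we only need $R$ and $A$ c.e. (not computable), which is what the lemma claims and what makes it a genuine improvement over the cited version; no further hypotheses are used.

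I do not anticipate a hard obstacle here: the proof is essentially a careful arithmetization. The only place to be slightly careful is in confirming that the finitely much existentially quantified data (the witnesses $a_1,\dots,a_n$, the stage $s$, and the blocks) is bounded/finite so that the leading block of quantifiers is genuinely $\Sigma^0_1$-over-$\Pi^0_1$ and collapses to $\Sigma^0_2$, which it does since the blocks are determined by $s$ and the $a_i$ via a computable function.
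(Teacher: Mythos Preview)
Your proposal is correct: the direct arithmetization you describe does express $(k,n)\in\chi(R)$ in the form $\exists\,(\text{finite data})\,\forall t\,(\text{computable})$, and your observation that per-block stabilization at size $k$ automatically forces the disjoint blocks to remain disjoint is exactly right.

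The paper, however, takes a different and much shorter route. Rather than writing out the $\Sigma^0_2$ predicate from scratch, it reduces to the already-known case of a c.e.\ equivalence relation on all of $\omega$ (Lemma~2.1(c) of \cite{CHR11}) by extending $R$ to $S(x,y)\iff R(x,y)\lor x=y$. This makes $S$ a c.e.\ equivalence relation on $\omega$, and the extension only creates additional size-$1$ classes (one for each element of $\omega\setminus A$), so $\chi(R)$ and $\chi(S)$ agree on all pairs $(k,n)$ with $k\geq 2$; the $k=1$ row is then easily handled. What your approach buys is self-containment: you never invoke the cited lemma, and your argument would in fact reprove that lemma as the special case $A=\omega$. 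What the paper's approach buys is brevity, at the cost of leaning on the earlier reference. Both are perfectly valid.
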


\begin{proof} The Lemma from \cite{CHR11} applies to a structure with universe $\omega$.  If $R$ is only defined on the computably enumerable set $A$, just let $S(x,y) \iff \left(R(x,y) \ \lor\ x=y\right)$.  This adds some classes of size 1 to the character, so that $\chi(S)$ is $\Sigma^0_2$ if and only if $\chi(S)$ is $\Sigma^0_2$. 
\end{proof}

The next lemma is part of Lemma 2.8 of \cite{CCHM06}.

\begin{lem} \label{lem4}
For any $\Sigma^0_2$ character $K$ which possesses a computable $s_1$-function, there is a computable equivalence structure $\sE$ with character $K$ and no infinite equivalence classes. 
\end{lem}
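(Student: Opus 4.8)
The plan is to build $\sE$ as the union of a computable increasing chain $\sE_0\subseteq\sE_1\subseteq\cdots$ of finite equivalence structures in which the relation is only ever extended and no element is ever removed; such a union is automatically a computable equivalence structure, and an equivalence class comes out infinite exactly when it is enlarged at infinitely many stages, so the task reduces to a purely combinatorial one: arrange that each class is enlarged only finitely often and that the final multiset of class sizes is exactly the one coded by $K$. First I would replace $K$ by a workable computable approximation. Since $K\in\Sigma^0_2$, the usual ``advance the witness one step at a time'' trick gives a uniformly computable sequence of finite sets $K_s$ from which, for each $k$, one reads off a computable sequence $n_k(s)$ with $\liminf_s n_k(s)=c_k$, where $c_k\in\omega\cup\{\infty\}$ is the number of classes of size $k$ demanded by $K$; so the approximation is eventually never stingy about size $k$ but may be transiently too generous. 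I would also fix the given $s_1$-function $f$, with $m_i=\lim_s f(i,s)$ strictly increasing and $(m_i,1)\in K$ for every $i$; the role of $f$ is that each $s\mapsto f(i,s)$ is a computable nondecreasing handle converging to a legitimate, and unboundedly large, class size, so a class may be grown along $f(i,\cdot)$ without overshooting and will settle at the honest size $m_i$.

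The construction runs in stages, keeping a pool of ``class slots'' — each slot carries at every stage an equivalence class that only gains fresh elements — together with a finite injection of the active slots into a set of ``demands'' of two kinds: approximation demands, where $(k,n)$ is demanded at stage $s$ iff $n\le n_k(s)$, and backbone demands $b_0,b_1,\dots$ activated on a slow schedule, where meeting $b_i$ means being a class grown along $f(i,\cdot)$. A slot assigned to $(k,n)$ is enlarged by one element per stage until it has size $k$; a slot assigned to $b_i$ is enlarged to size $f(i,s)$; fresh slots are created to cover newly appearing demands. When the approximation withdraws a demand that some slot was covering, that slot is permanently reassigned to a not-yet-used backbone index $i$ with $f(i,s)$ already exceeding its current size — available since $m_i\to\infty$ and only finitely many indices are in play. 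To prevent double counting at the sizes $m_i$, a slot assigned to $b_i$ also ``claims'' one approximation demand of size $f(i,s)$, claimed demands getting no separate slot; in the limit such a slot has size $m_i$ and claims a demand $(m_i,n)\in K$. Since $\liminf_s n_k(s)=c_k$, every demand of $K$ is permanently present from some stage on, so each slot is reassigned only finitely often and its size converges to a finite value; hence $\sE=\bigcup_s\sE_s$ has no infinite class, and it then remains to check that the stable slots realize exactly the multiplicities recorded in $K$.

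The step I expect to be the genuine obstacle is exactly that last verification: showing the matching stabilizes for every slot and that the stable configuration over- or under-counts no size — that the backbone classes coincide with, rather than duplicate, the $K$-demands at the sizes $m_i$; that slots re-absorbed into the backbone after a withdrawal inflate no multiplicity; and that the $c_k$ genuinely demanded slots of each size $k$ are never reassigned away. This is a finite-injury-style bookkeeping argument, and pinning down the priority ordering on slots and the precise rule by which a backbone slot picks the demand it claims is where the real care is needed; the remaining points — that $\bigcup_s\sE_s$ is a legitimate equivalence structure and is computable — are routine.
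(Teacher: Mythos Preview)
The paper does not give its own proof of this lemma: it is quoted as ``part of Lemma~2.8 of \cite{CCHM06}'' and used as a black box. So there is nothing in the present paper to compare your argument against.

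That said, your sketch is the standard construction, and it is essentially the one carried out in \cite{CCHM06}. The two key ingredients you isolate are exactly right: a $\Sigma^0_2$ approximation $K_s$ with the property that $(k,n)\in K$ iff $(k,n)\in K_s$ for cofinitely many $s$ (giving your $\liminf$ description of the multiplicities), and the $s_1$-function as a \emph{garbage-collection} mechanism --- when a class has been grown toward a demand that the approximation later withdraws, it can be safely parked on an unused backbone index $i$ and grown along $f(i,\cdot)$ to the legitimate size $m_i$. The unboundedness of the $m_i$ is precisely what guarantees such an index is always available, and the convergence of each $f(i,\cdot)$ is what keeps every class finite.

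You are also right that the delicate point is the exact-character verification, and your proposal is honest about not having nailed this down. The one place I would tighten: your ``claiming'' mechanism, where a backbone slot following $f(i,\cdot)$ absorbs one approximation demand at its current size, is the right idea for avoiding overcount at the sizes $m_i$, but as written the claimed demand moves each time $f(i,s)$ increases, which can spawn a new slot at the just-vacated size. You need to argue that this cascading is still finite-injury --- it is, because $f(i,\cdot)$ changes only finitely often --- but that step should be made explicit. With that bookkeeping in place, the argument goes through.
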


The next result is a variation of Lemma 2.6 of \cite{CCHM06}.  It follows from the previous Lemma \ref{lem4} that it also holds for structures restricted to a computably enumerable universe. 
 
\begin{lem} \label{lem5} Let $\mathcal{A} = (\omega,E)$ be a computably enumerable equivalence structure with
no infinite equivalence classes and an unbounded character. Then
 there is a computable $s_{1}$-function $f$ such that $\mathcal{A}$ contains an
equivalence class of size $m_{i}$ for all $i$, where $m_i = lim_{s}f(i,s)$.
\end{lem}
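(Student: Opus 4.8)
The plan is to run the usual stage-by-stage enumeration of the c.e.\ relation $E$ and to track, for a carefully chosen family of representatives, the sizes of their equivalence classes. For $x,s\in\omega$ put $c_s(x)=|\{y<s:(x,y)\in E_s\}|$, where $E_s$ is the stage-$s$ approximation to $E$. Then $c_s(x)$ is computable uniformly in $x$ and $s$, is non-decreasing in $s$, and, because $\mathcal A$ has no infinite classes, converges to the finite value $c(x)=|[x]_E|$; moreover $\mathcal A$ has a class of size $n$ exactly when $n=c(x)$ for some $x$, and unboundedness of the character means that for every $n$ there is an $x$ with $c(x)>n$. So it suffices to produce a computable $f(i,s)$, non-decreasing in $s$, whose limits $m_i=\lim_s f(i,s)$ are strictly increasing with each $m_i$ of the form $c(x)$.

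First I would build, by stages, a finite sequence of \emph{markers} $x_0^s,x_1^s,\dots$ maintaining the invariant $c_s(x_0^s)<c_s(x_1^s)<\cdots$ over the indices whose markers are currently defined, and set $f(i,s)=c_s(x_i^s)$ for those $i$; for $i>s$ put $f(i,s)=0$, and whenever index $i$ has no current marker put $f(i,s)=f(i,s-1)$, i.e.\ ``hold'' the last value. At stage $s$, if the invariant has failed at some index---necessarily because the $c_s$-value of a lower marker has caught up with that of a higher one---discard that marker and every marker above it; then, processing the currently undefined indices $j\le s$ in increasing order, search for the least $x<s$ with $c_s(x)>c_s(x_{j-1}^s)$ (just $c_s(x)\ge 1$ when $j=0$) and $c_s(x)\ge f(j,s-1)$, declaring it $x_j^s$ if found and otherwise leaving $j$ and all larger indices undefined at this stage. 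It is immediate that $f$ is total and computable, and that $f$ is non-decreasing in $s$: a persisting marker satisfies $c_{s+1}(x_j^s)\ge c_s(x_j^s)$, a freshly chosen marker was required to satisfy $c_s(x_j^s)\ge f(j,s-1)$, and otherwise we hold.

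The heart of the matter is the claim that each marker is discarded only finitely often, which I would prove by induction on $i$. The marker $x_0^s$ can be taken to be $0$ permanently, so $f(0,s)=c_s(0)\to c(0)=:m_0$. Assume $x_0,\dots,x_{i-1}$ have reached their final values and $f(i-1,\cdot)$ has reached its final value $m_{i-1}=c(x_{i-1})$ by a stage $s_0$. After $s_0$, the marker $x_i$ is discarded only when $c_s(x_i^s)\le m_{i-1}$, and each redefinition produces an $x$ with $c_s(x)>m_{i-1}$; since there is some $x$ with $c(x)>m_{i-1}$, the search eventually returns such an $x$, after which $c_s(x_i^s)>m_{i-1}$ holds forever and $x_i$ is never discarded again. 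Hence $x_i$ stabilizes, $f(i,s)=c_s(x_i^s)\to m_i:=c(x_i)$, and once all of $x_0,\dots,x_{i+1}$ have stabilized their $c_s$-values are constant, so the invariant reads $m_i<m_{i+1}$. As each $m_i=c(x_i)$ is the size of a genuine class of $\mathcal A$, $f$ is the desired $s_1$-function.

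The main obstacle is precisely this stabilization step, and it is the only place where both hypotheses are genuinely used. ``No infinite equivalence classes'' forces each $c_s(x)$---hence inductively the value $c_s(x_{i-1}^s)$ the higher markers must beat---to settle down, so a marker is disturbed only finitely often; ``unbounded character'' guarantees that when $x_i$ must be reinstated above the now-fixed value $m_{i-1}$, an element lying in a large enough class is eventually available. The ``holding'' convention is the small technical device that permits markers to be redefined freely without ever decreasing $f$ in its second coordinate.
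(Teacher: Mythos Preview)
Your proposal is correct and follows essentially the same approach as the paper: a marker-based priority construction tracking approximate class sizes, with injury when a lower marker's class catches up to a higher one's, and stabilization proved by induction on $i$ using exactly the two hypotheses in the way you identify. The paper packages the step slightly differently---at stage $s+1$ it chooses an auxiliary enumeration parameter $p_{s+1}>p_s$ and a lexicographically least full sequence $b_0,\dots,b_{s+1}$ all at once, rather than discarding from a failure point and re-searching index by index with a ``hold'' device---but the underlying mechanism and the convergence argument are the same.
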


\begin{proof} Let $E^p$ be the $p^{th}$ stage in the enumeration of $E$, so that $E = \cup_p E^p$. 
We will define a uniformly computable family $a_{i}^{s}$ for $i\leq s$
in such a way that $a_{i}=lim_{s}a_{i}^{s}$ exists. We will also define a
computable sequence $p_{s}$, and let 
\[
f(i,s)=\left|\{a\leq p_{s}:aE^{p_s}a_{i}^{s}\}\right|.
\]
Hence, we will have 
\[
m_{i}=\lim\limits_{s}\;\left(\left|\{a\leq p_s :aE^{p_s} a_{i}\}\right|=\left|[a_{i}]\right|\right).
\]

At \emph{stage} $0$, we have $p_{0}=0$ and $a_{0}^{0}=0$, so $f(0,0)=1$.
In fact, $a_0^s$ will equal 0 for all $s$.

After stage $s$, we have $p_{s}$ and $a_{0}^{s},\dots ,a_{s}^{s}$ with $f(i,s)$ as above such that 
\[
f(0,s)<f(1,s)<\dotsb <f(s,s).
\]

At \emph{stage} $s+1$, we define the least $p > p_s$ and the lexicographically least sequence $b_0,\dots,b_{s+1}$ such that for
all $i \leq s$,
\[
f(i,s) \leq \left|\{a \leq p: a E^p b_i\}\right| < \left|\{a \leq p: a E^p b_{i+1}\}\right|,
\]
as follows.  Let $b_0 = a_0 = 0$.  Furthermore, $b_i = a_i^{s+1}$ whenever there do not exist a pair
$a,j$ with $j \leq i$, $aE^p a^s_j$ and $p_s < a \leq p$.
Then we let $a_i^{s+1} = b_i$ for each $i$ and let $p_{s+1} = p$. 

To see that such $p$ exists, let $m$ be the largest such
that $[a_{j}^{s}]=\{a \leq p_s: a E^{p_s} a_{j}^{s}\}$ for all $j\leq m$, and 
let $b_{i}=a_{i}^{s}$ for all $i \leq m$. 
Then use the fact that $\chi (\mathcal{A})$ is unbounded to find $b_{m+1},\dots ,b_{s+1}$ with 
\[
\left|[a_{m}^{s}]\right| < \left|[b_{m+1}]\right|<\left|[b_{m+2}]\right|<\dotsb < \left|[b_{s+1}]\right|,
\]
and take $p$ large enough so that $[b_{i}]=\{a \leq p: aE^p b_{i}\}$.

Finally, we verify that $a_i = lim_s a_i^s$ exists for each $i$.  Since there is no $j <0$, it follows from the construction that $a_0^s = 0$ for all $s$. Given $t$ such that $a_i = lim_s a_i^s$ has converged by stage $t$ for all $i \leq k$, let $r \geq t$ be large enough so that 
\[[a_i] = \{a < p_r: a E^{p_r} a_i\}\] for all $i \leq k$. (This uses the fact that there are no infinite classes.)  It follows from the construction that $a_{i+1}^s = a_{i+1}^r$ for all $s > r$.  
\end{proof}

\begin{prop} If $\sE$ is a computably enumerable equivalence structure with no infinite equivalence classes, then $\sE$ is isomorphic to a computable structure.
\end{prop}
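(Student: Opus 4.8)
The plan is to reduce the proposition to the existence results already in hand. It suffices to exhibit \emph{some} computable equivalence structure with no infinite classes and the same character as $\sE$: any two equivalence structures with countably infinite universe, no infinite classes, and identical character are isomorphic, since from the character one recovers, for every finite $k$, the exact number (in $\omega\cup\{\infty\}$) of classes of size $k$, and when there are no infinite classes this data determines the isomorphism type. Before anything else I would normalize the universe to $\omega$: if the universe of $\sE$ is finite, $\sE$ is trivially (isomorphic to) a computable structure, and if it is an infinite c.e.\ set, precomposing with a computable bijection from $\omega$ carries $\sE$ to an isomorphic c.e.\ equivalence structure on $\omega$. So assume $\sE=(\omega,E)$ with $E$ c.e.; since $\omega$ is infinite and no class is infinite, $\sE$ has infinitely many equivalence classes.

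Next I would split on whether the character is bounded. Suppose first that $\chi(\sE)$ is unbounded, i.e.\ $\sE$ has classes of arbitrarily large finite size. By Lemma \ref{lem3}, $\chi(\sE)$ is $\Sigma^0_2$. Because $\sE$ is a c.e.\ equivalence structure on $\omega$ with no infinite classes and unbounded character, Lemma \ref{lem5} applies and yields a computable $s_1$-function $f$ such that $\sE$ has a class of size $m_i=\lim_s f(i,s)$ for every $i$; in other words $\chi(\sE)$ possesses the computable $s_1$-function $f$. Now Lemma \ref{lem4} produces a computable equivalence structure with character $\chi(\sE)$ and no infinite classes, and by the first paragraph that structure is isomorphic to $\sE$.

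The remaining case is that $\chi(\sE)$ is bounded, say every class of $\sE$ has size at most $N$. For $1\le k\le N$ let $c_k\in\omega\cup\{\infty\}$ be the number of classes of size $k$ in $\sE$; since $\sE$ has infinitely many classes and only sizes at most $N$ occur, at least one $c_k$ equals $\infty$. Regarding the finitely many data $N,c_1,\dots,c_N$ as fixed, one builds a computable equivalence structure on $\omega$ by carving $\omega$ into consecutive finite blocks in a fixed computable pattern: exactly $c_k$ blocks of size $k$ for each $k$ with $c_k$ finite, and infinitely many blocks of size $k$ for each $k$ with $c_k=\infty$, the infinite families interleaved so that every natural number is eventually placed. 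This structure is plainly computable, has no infinite classes, and has character $\chi(\sE)$, hence is isomorphic to $\sE$.

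I expect essentially all the real content to sit inside Lemma \ref{lem5}, which is already proved; the rest is bookkeeping. The two points to keep an eye on are (i) checking that the hypotheses of Lemma \ref{lem5} are exactly what the unbounded case supplies and that its output is in the form Lemma \ref{lem4} consumes, and (ii) in the bounded case, observing that some class size must be realized infinitely often, so that the hand-built structure is genuinely infinite and matches $\sE$ on the universe as well as on the character.
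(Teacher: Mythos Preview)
Your argument is correct and follows the same chain as the paper's proof: Lemma~\ref{lem3} to show $\chi(\sE)$ is $\Sigma^0_2$, Lemma~\ref{lem5} to extract a computable $s_1$-function, then Lemma~\ref{lem4} to build the computable copy. The paper's proof is just these three sentences, without your normalization step or your case split.

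Your case split on bounded versus unbounded character is, strictly speaking, more careful than the paper. The paper invokes Lemma~\ref{lem5} directly, but as stated that lemma assumes the character is unbounded (and indeed no bounded character can possess an $s_1$-function in the sense defined, since the $m_i$ are strictly increasing). So the paper's one-line proof has a small gap exactly where you anticipated one, and your hand-built bounded case fills it. That said, the bounded case is routine, and the paper presumably regards it as folklore; your point~(ii) at the end is the right thing to verify there. Nothing in your write-up is extraneous, and the unbounded case lines up with the paper word for word in spirit.
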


\begin{proof} By Lemma \ref{lem3}, $\sE$ has a $\Sigma^0_2$ character, and by Lemma \ref{lem5}, this character possesses a computable $s_1$-function.  Then by Lemma \ref{lem4}, there is a computable structure with the same character and no infinite equivalence classes, and hence isomorphic to $\sE$. 
\end{proof} 

This last result also holds for a computably enumerable structure $\sE = (A,E)$ where $A$ is a computably enumerable set.

\section{Generically Computable Equivalence Structures}
In this section, we define the notion of a generically computable relation and, in particular, of a generically computable equivalence relation. In consideration of Lemma \ref{lem2} and Theorem \ref{thm1}, we look for a dense set $A$  in the domain so that the relation is computable on $A \times A$ rather than for a dense set in $\omega \times \omega$ where the relation is computable.  


\begin{dfn} \label{def:faithful} If $R$ is a relation on $\omega$ and $A$ is a subset of $\omega$, we say that $A$ is \emph{$R$-faithful} if 
whenever $a \in A$ and $R(\ov{b})$ for some tuple including $a$, then every element of $\ov{b}$ is in $A$.  
\end{dfn}

Thus if $R$ is an equivalence relation and $A$ is $R$-faithful, then for any $R$-equivalence class $C$, either $C \subseteq A$ or $C \cap A = \emptyset$. The following notions of generic computability seem to be the most appropriate in terms of the results obtained.  The first notion applies for any relation but the other two are specific to equivalence relations. 

\begin{dfn} \label{def:genrel} Let $R$ be a relation on $\omega$.  
\begin{enumerate}
\item We say that a relation $R \subseteq \omega^r$ is a \emph{generically computable relation} if there is a partial computable function $\Phi: \omega^r \to 2$ such that  $\Phi = \chi_R$ on the domain of $\Phi$, and there is a computably enumerable set $A$ of asymptotic density 1 such that $A^r$ is a subset of the domain of  $\Phi$.  We say that $R$ is \emph{faithfully}  generically computable if the set $A$ is $R$-faithful.
\item We say that $R$ is a \emph{strongly generically computable equivalence relation} if there is a computable equivalence relation $S$ on $\omega$ and a computably enumerable set $A$ of asymptotic density 1 such that $R$ agrees with $S$ on $A \times A$.  We say that $R$ is \emph{faithfully} strongly generically computable if $A$ is $R$-faithful and also $S$-faithful. 
\item We say that $R$ is a \emph{generically computably enumerable equivalence relation} if there is a computably enumerable equivalence relation $S$ on a computably enumerable set 
$B$ and a computably enumerable set $A \subseteq B$ of asymptotic density 1 such that $R$ agrees with $S$ on $A \times A$.  We say that $R$ is \emph{faithfully} generically computably enumerable if $A$ is $R$-faithful and also $S$-faithful. 
\end{enumerate}
\end{dfn}
 
It is easy to see that strongly generically computable implies generically computable, which implies generically computably enumerable  (For the latter, take the reflexive, transitive, symmetric closure of $\{(x,y): x,y \in A\ \&\ \Phi(x,y) = 1\}$.)

\begin{thm} \label{thm2}
 If an equivalence structure $\sE = (\omega,E)$ is generically
 computable (respectively, generically computably enumerable), then there is some infinite computable
 $Y \subseteq \omega$ such that the restriction of $E$ to $Y \times Y$ is computable (respectively, computably enumerable). 
\end{thm}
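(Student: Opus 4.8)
The plan is to extract from the dense set $A$ witnessing generic computability a computable subset $Y$ on which the structure behaves well. Suppose first that $\sE$ is generically computable, so there is a partial computable $\Phi\colon\omega\times\omega\to 2$ agreeing with $\chi_E$ on its domain, and a c.e. set $A$ of asymptotic density $1$ with $A\times A\subseteq\operatorname{Dom}(\Phi)$. Since $A$ has density $1$, it certainly has upper density $1$, so by Lemma \ref{lem1} there is a computable set $B\subseteq A$ with upper density $1$; in particular $B$ is infinite. Now $B\times B\subseteq A\times A\subseteq\operatorname{Dom}(\Phi)$, and $B$ is computable, so the restriction of $E$ to $B\times B$ is computed by the total-on-$B\times B$ function $\Phi$: given $(x,y)\in B\times B$ we run $\Phi(x,y)$, which halts and returns $\chi_E(x,y)$. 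This already gives an infinite computable $Y=B$ with $E\restriction Y\times Y$ computable. (One may additionally thin $B$ to a computable $Y$ that is $E$-faithful whenever the witness $A$ was $R$-faithful, but that is not required by the statement.)

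For the generically computably enumerable case, we have a c.e. equivalence relation $S$ on a c.e. set $B'$ and a c.e. set $A\subseteq B'$ of density $1$ with $E$ agreeing with $S$ on $A\times A$. Again apply Lemma \ref{lem1} to get a computable $Y\subseteq A$ of upper density $1$, hence infinite. Then for $(x,y)\in Y\times Y$ we have $x,y\in A$, so $E(x,y)\iff S(x,y)$; since $S$ is c.e., the set $\{(x,y)\in Y\times Y: E(x,y)\}=\{(x,y)\in Y\times Y: S(x,y)\}$ is the intersection of a c.e. set with the computable set $Y\times Y$, hence c.e. Thus $E\restriction Y\times Y$ is computably enumerable, as required.

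The only real subtlety — and the step I expect to be the crux — is making sure that restricting $\Phi$ (resp. $S$) to $Y\times Y$ genuinely \emph{computes} (resp. \emph{enumerates}) the relation $E$ there, i.e. that the agreement clause "$\Phi=\chi_E$ on $\operatorname{Dom}(\Phi)$" combined with "$Y\times Y\subseteq\operatorname{Dom}(\Phi)$" really does pin down all of $E$ on $Y$. This is immediate once one observes that $Y\subseteq A$ forces $Y\times Y\subseteq A\times A\subseteq\operatorname{Dom}(\Phi)$, so no pair from $Y$ escapes the domain; there is no hidden obstacle here beyond correctly invoking Lemma \ref{lem1} to replace the merely c.e. dense set $A$ by a computable infinite subset. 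Everything else is bookkeeping: $Y$ is computable and infinite by Lemma \ref{lem1}, and $E\restriction Y\times Y$ inherits the computability (resp. computable enumerability) of $\Phi$ (resp. $S$).
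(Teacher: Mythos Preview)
Your proof is correct and follows essentially the same approach as the paper: apply Lemma~\ref{lem1} to the c.e.\ dense set $A$ to obtain an infinite computable subset $Y\subseteq A$, and then observe that $\Phi$ (respectively $S$) decides (respectively enumerates) $E$ on $Y\times Y$. Your treatment of the generically computably enumerable case is in fact more explicit than the paper's, which leaves that case implicit in the ``respectively'' clause.
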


\begin{proof} Let $\Phi$ be the partial computable function and let $A$ be an asymptotically dense computably enumerable set,  given by the definition above. 
Then, by Lemma \ref{lem1}, $A$ has a computable subset $Y$ with upper density 1 (and thus infinite) with $Y \times Y \subseteq Dom(\Phi)$. Then $\chi_E = \Phi$ on the computable set $Y$.
\end{proof}

Note that the set $Y$ from the proof of Theorem \ref{thm2}  may not be a \emph{faithful} substructure of $\sE$.

\begin{ex} \label{ex1}  Let $K = \{(1,k): k \in C\}$ where $C$ has no infinite $\Sigma^0_2$ subset. Also take
an immune set $B$. Then define $\sE$ so that $B$ is one infinite class, and $\omega \setminus B$ has character $K$.
Then, while $\sE$ itself need not be computable, $\sE$ has a generically computable copy, where the infinite class is a dense computable set. 
Now let  $Y$ be an infinite computable subset of $\omega$. 
Since $B$ is immune, $Y \setminus B$ is infinite, so that $Y$ has infinitely many elements with finite equivalence classes.  If $(Y,E)$ has a computable copy, then this copy has a $\Sigma^0_2$ character which is a subset of $C$. Thus at least $(Y,E)$ is not a faithful substructure. 
\end{ex}

Our first result was unexpected.   

\begin{prop} \label{prop1} Every equivalence structure $\sE = (\omega,E)$ has a strongly generically computable copy.
\end{prop}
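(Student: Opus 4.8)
The plan is to take an arbitrary equivalence structure $\sE = (\omega, E)$ and build a computable equivalence relation $S$ on $\omega$ together with a c.e.\ set $A$ of asymptotic density $1$ so that $E$ and $S$ agree on $A \times A$, where moreover $S$ is required to be an honest computable equivalence relation and $A$ is genuinely c.e.\ (this is the ``strong'' requirement). The key idea is to sacrifice a density-zero set of ``garbage'' elements onto which we dump all the information that makes $E$ non-computable, and to arrange the surviving density-one set $A$ to carry a copy of (a substructure of) $\sE$ on which things can be decided computably. Concretely, I would fix a fast-growing computable sequence, say the powers of $2$ or the factorials $n!$, and reserve the sparse set of ``checkpoint'' positions as the place where complexity lives; the complement, which has density one, will be $A$.

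First I would isomorphically transport $\sE$ to a new structure on $\omega$ in the following way. Enumerate $E$ in stages. We build the copy by a back-and-forth-free injection: at stage $s$ we have committed finitely many elements of $\omega$ to finitely many equivalence classes, and we place each newly-considered element of $\sE$ into the copy at the least available non-checkpoint position, joining it to the already-placed representatives of its $E$-class. The point is that within the density-one set $A$ we only ever need to answer ``are $a$ and $b$ in the same class?'' for elements already placed; since placement is a computable process and we never move an element once placed, $S$ restricted to $A \times A$ is computable — indeed we can let $S$ on $A \times A$ be exactly the c.e.\ relation generated by the commitments, and it is in fact computable there because for placed $a,b$ the commitment is finalized once both are placed. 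The checkpoint positions, which have density zero, we simply make each a singleton class of $S$ (or lump them all into one class — either works), and $A$ is the c.e.\ set of positions that eventually get used for a placed element.

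The main obstacle I anticipate is twofold and slightly delicate: (1) ensuring the resulting structure is genuinely \emph{isomorphic} to $\sE$ and not merely to a substructure — this forces the placement procedure to eventually place \emph{every} element of $\sE$ and to realize every $E$-class with its correct (finite or infinite) cardinality, which requires a careful priority-style bookkeeping so that infinite classes of $\sE$ keep receiving new elements cofinally while finite classes get exactly the right number; and (2) ensuring $A$ really has asymptotic density $1$, which I would secure by making the set of reserved checkpoint positions extremely sparse (density zero by construction, e.g.\ the image of $n \mapsto 2^n$) and verifying that everything \emph{not} reserved eventually lands in $A$ — i.e.\ that the placement never permanently starves, so $A$ is cofinite-in-density, hence density one. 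I would then check that $S$ as defined is a total computable equivalence relation on all of $\omega$ (reflexive, symmetric, transitive by construction since each $S$-class is either a finalized commitment-block inside $A$ or a checkpoint singleton), that $A$ is c.e., that $\rho(A) = 1$ by Lemma~\ref{lem2}-style sparsity estimates on the checkpoints, and that $E$ agrees with $S$ on $A \times A$ because the transported structure restricted to $A$ is literally the relevant piece of the copy of $\sE$. Finally I would note that $A$ need not be $E$-faithful here — this is exactly the phenomenon isolated in Example~\ref{ex1} and the reason the later ``faithful'' theorem is a genuine strengthening — so no faithfulness obligations complicate the construction.
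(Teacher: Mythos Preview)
Your proposal has a genuine gap, and it lies exactly where you write ``for placed $a,b$ the commitment is finalized once both are placed.'' Two problems. First, the proposition is about an \emph{arbitrary} equivalence structure $\sE$; there is no assumption that $E$ is c.e., so ``enumerate $E$ in stages'' is not available. Second, and more seriously, even granting a c.e.\ $E$: when $a$ is placed at stage $s_1$ and $b$ at stage $s_2$, you may not yet have seen the pair $(a,b)$ in $E$, so you place them in separate blocks. If later $aEb$ appears, the copy can only be isomorphic to $\sE$ if you merge their classes --- but then the relation on $A$ is only c.e., not computable, and you do not get a \emph{computable} $S$ agreeing with it on $A\times A$. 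Stepping back, your construction places essentially all of $\sE$ onto the dense set $A$; but then $(A,E^*{\upharpoonright}A)\cong\sE$, and asking for a computable $S$ that agrees with $E^*$ on $A\times A$ amounts to asking that $\sE$ have a computable copy, which is false in general.

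The paper's proof avoids this by never trying to make the dense set carry a copy of $\sE$. Instead it argues by cases and in each case identifies a substructure of $\sE$ whose isomorphism type is \emph{trivially} computable: a single infinite class, or infinitely many classes all of one fixed size $k$, or (when the character is unbounded with no repeated sizes and no infinite class) a transversal of infinitely many classes, i.e.\ an infinite set of pairwise inequivalent points. That trivial substructure is placed on a computable dense coinfinite $A$, and the entire remainder of $\sE$ --- which may be arbitrarily complex --- is dumped on the density-zero complement $\omega\setminus A$. The computable $S$ is then chosen to match the trivial pattern on $A$ (two-class, all-size-$k$, or identity). Note that in the third case $A$ is not $E^*$-faithful, which is precisely why Proposition~\ref{prop1} does not yield faithfulness and Theorem~\ref{thm4} requires separate work; your closing remark about this is correct, but your construction never gets there.
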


\begin{proof} The proof is by cases.  Suppose first that $\sE$ has an infinite equivalence class and let $B$ be such a class.  Let $A$ be a computable dense set. Then we can define a generically computable copy ${\sE}^* = (\omega,E^*)$ of $\sE$ so that $A$ is an infinite equivalence class and $(\omega \setminus B, E)$ is isomorphic to $(\omega \setminus B,E^*)$. The substructure $(B,E)$ is faithful, so that in the generically computable copy elements of $A$ are never equivalent to elements of $\omega \setminus A$. To see that $\sE^*$ is strongly generically computable, let $S = (A \times A) \cup (\omega \setminus A) \times (\omega \setminus A)$ be the equivalence relation with two infinite classes, $A$ and $\omega \setminus A$. Then $S$ is computable and it agrees with $E^*$ on the computable dense set $A$. In this case, the copy is faithfully strongly generically computable. 

Next, suppose that $\sE$ has no infinite equivalence class and therefore $\chi(\sE)$ is infinite.  There are two subcases.  First suppose that there is a finite $k$ such that $\sE$ has infinitely many classes of size $k$, and let $B \subset \omega$ be such that $(B,E)$ has character $K = \{(k,n): n>0\}$ .  Then there is a computable structure $(A,R)$ with character $K$, and we may take $A$ to have asymptotic density one and be coinfinite.  Then we may build a relation $E^*$ on $\omega \setminus A$ so that $(\omega \setminus A,E^*)$ is isomorphic to $(\omega \setminus B, E)$.  Once again, the substructure $(B,E)$ is faithful, so that in the generically computable copy elements of $A$ are never equivalent to elements of $\omega \setminus A$. Now the computable structure $(\omega,S)$ with equivalence classes $A_n = \{nk,nk+1,\dots,nk+k-1\}$ for each $n$ agrees with $E^*$ on the dense set $A$, so that $\sE^*$ is strongly faithfully generically computable. 

If there are no infinite classes, and no fixed $k$ with infinitely many classes of size $k$, then the character must be unbounded, that is, there must be infinitely many different $k$ such that $\sE$ has an equivalence class of size $k$. Choose one such class $B_k$ for each $k$, and let $B \subseteq \omega$ consist of exactly one element from each class $B_k$. Then the substructure $(B,E)$ consists of infinitely many classes of size one. We may assume, without loss of generality, that $B$ is coinfinite. Now let $A \subset \omega$ be a computable, co-infinite set of asymptotic density one, and let $f$ be a permutation of $\omega$ mapping $A$ onto $B$, and thus mapping $\omega \setminus A$ onto $\omega \setminus B$. Then we may define a copy of $\sE$ by letting $xRy \iff f(x) E f(y)$.  Then $R$ is computable on the computable, dense set $A$, since for $x,y \in A$, we have $xRy \iff x = y$.  In this case, $(A,E)$ is not necessarily a faithful substructure of $\sE$. To see that $\sE^*$ is strongly genericallly computable, let $S = \{(x,x): x \in \omega\}$, so that all classes of $S$ have size one. Then the computable structure $(\omega,S)$ agrees with $\sE^*$ on the computable dense set $A$. 
\end{proof}
 
So it seems that the notion of a generically computable equivalence structure is a bit too broad. Next, we consider faithful generically computable structures.

\begin{thm} \label{thm4} Let $\sE = (\omega,E)$ be an equivalence structure. Then the following are equivalent:
\begin{enumerate}
\item [(a)] $\sE$ has a faithfully strongly generically computable copy;
\item [(b)] $\sE$ has a faithfully  generically computable copy;
\item [(c)] $\sE$ has a faithfully generically computably enumerable copy;
\item [(d)] $\sE$ has an infinite faithful substructure with a computable copy;
\item [(e)] $\sE$ has an infinite faithful substructure with a computably enumerable copy;
\item  [(f)] Either (i) $\sE$ has an infinite equivalence class, or (ii) there is a finite $k$ such that $\sE$ has infinitely many classes of size $k$, or (iii) $\chi(\sE)$ has an infinite $\Sigma^0_2$ subset with an $s_1$-function.  
\end{enumerate}
\end{thm}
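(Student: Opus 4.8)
The plan is to prove the cycle $(a)\Rightarrow(b)\Rightarrow(c)\Rightarrow(e)\Rightarrow(f)\Rightarrow(d)\Rightarrow(a)$, which links all six conditions. The implications $(a)\Rightarrow(b)$ and $(b)\Rightarrow(c)$ are essentially the observation recorded just after Definition~\ref{def:genrel}, checked to respect faithfulness: a faithfully strongly generically computable copy is faithfully generically computable (use the witnessing computable $S$ as the partial computable function, on the full domain $\omega\times\omega\supseteq A\times A$), and a faithfully generically computable copy is faithfully generically computably enumerable (take $B=A$ and $S=\{(x,y)\in A\times A:\Phi(x,y)=1\}$, which — because $A$ is already $E$-faithful — is an equivalence relation on $A$, is c.e., and is trivially $S$-faithful since it lives on $A\times A$). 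For $(c)\Rightarrow(e)$: given a faithfully generically computably enumerable copy $\sE'$ of $\sE$ with witnesses $S$ (a c.e. equivalence relation on a c.e. set $B$), a c.e. dense $A\subseteq B$ that is both $E'$-faithful and $S$-faithful, and $E'=S$ on $A\times A$, note that $S\restriction A=E'\restriction A$ is c.e., so $(A,E'\restriction A)$ is a c.e. equivalence structure; $E'$-faithfulness of $A$ makes it an infinite faithful substructure of $\sE'$, and transporting along an isomorphism $\sE'\cong\sE$ gives an infinite faithful substructure of $\sE$ with a c.e. copy.

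For $(e)\Rightarrow(f)$: let $\sD=(D,E\restriction D)$ be an infinite faithful substructure of $\sE$ with c.e. copy $\sD^{\ast}$; faithfulness makes each $\sD$-class a full $\sE$-class, so $\chi(\sD)\subseteq\chi(\sE)$. If $\sE$ has an infinite class we are in (i); otherwise every class of $\sD$ is finite, and if some size $k$ occurs infinitely often in $\sD$ it does in $\sE$ (case (ii)), while if $\chi(\sD)$ is unbounded then $\sD^{\ast}$ is a c.e. equivalence structure with no infinite classes and unbounded character, so Lemma~\ref{lem5} gives it a computable $s_{1}$-function and Lemma~\ref{lem3} makes $\chi(\sD^{\ast})=\chi(\sD)$ a $\Sigma^0_2$ set, yielding (iii). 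For $(f)\Rightarrow(d)$: in case (i) a single infinite class of $\sE$ is an infinite faithful substructure with the obvious one-class computable copy; in case (ii) the union of the size-$k$ classes is an infinite faithful substructure isomorphic to the computable structure $x\sim y\iff\lfloor x/k\rfloor=\lfloor y/k\rfloor$; in case (iii), with $K\subseteq\chi(\sE)$ the infinite $\Sigma^0_2$ set possessing the computable $s_{1}$-function $f$ and $m_i=\lim_s f(i,s)$, choose one $\sE$-class $C_i$ of size $m_i$ for each $i$ (distinct, as the $m_i$ are) and let $\sD=\bigcup_i C_i$; then $\sD$ is an infinite faithful substructure of $\sE$ with no infinite classes whose character is $\Sigma^0_2$ and possesses $f$, so Lemma~\ref{lem4} supplies a computable copy.

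The substance is in $(d)\Rightarrow(a)$. Let $\sD=(D,E\restriction D)$ be an infinite faithful substructure of $\sE$ with computable copy $\sD^{\ast}=(\omega,E^{\ast})$, and let $\sG$ be the substructure of $\sE$ on $\omega\setminus D$; since $\sD$ is faithful, $\sE\cong\sD^{\ast}\sqcup\sG$. Fix a computable set $A$ of asymptotic density $1$ whose complement has the same cardinality as the universe of $\sG$ ($A=\omega$ if $\sG$ is empty, $A$ cofinite of the correct size if $\sG$ is finite, $A$ coinfinite otherwise), together with a computable bijection $\pi\colon\omega\to A$. Define a copy $(\omega,\widetilde E)$ of $\sE$ by carrying $E^{\ast}$ onto $A\times A$ through $\pi$, carrying $\sG$ onto $\omega\setminus A$ through any bijection, and placing no $\widetilde E$-edge between $A$ and $\omega\setminus A$; faithfulness of $\sD$ guarantees this really is a copy of $\sE$, and $A$ is $\widetilde E$-faithful. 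Now let $S$ agree with $\widetilde E$ on $A\times A$, consist of singletons on $\omega\setminus A$, and have no edges across: $S$ is a computable equivalence relation (decide membership using computability of $A$, of $E^{\ast}$, and of $\pi^{\pm1}$), $A$ is $S$-faithful, and $\widetilde E=S$ on $A\times A$, so $(\omega,\widetilde E)$ is a faithfully strongly generically computable copy of $\sE$, closing the cycle.

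I expect the main obstacle to be the construction in $(d)\Rightarrow(a)$: one must fit the computable copy of the faithful substructure onto a computable set of density $1$ while reserving on the complement exactly enough room for the rest of $\sE$, and then exhibit a single computable equivalence relation faithful for both the new copy and the witness, with the degenerate cases (where $\sE$ already has an essentially computable copy) needing a small separate remark. A secondary point to pin down is the precise reading of ``$\Sigma^0_2$ subset of $\chi(\sE)$ possessing an $s_{1}$-function'' in (f), so that the object produced by Lemmas~\ref{lem3} and \ref{lem5} in $(e)\Rightarrow(f)$ matches what Lemma~\ref{lem4} consumes in $(f)\Rightarrow(d)$; passing to the downward closure in the multiplicity coordinate, which stays $\Sigma^0_2$ and inside $\chi(\sE)$, removes any mismatch.
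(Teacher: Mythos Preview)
Your proof is correct and uses essentially the same ingredients as the paper's. The paper runs two separate cycles, $(a)\Rightarrow(b)\Rightarrow(c)\Rightarrow(f)\Rightarrow(a)$ and $(d)\Rightarrow(e)\Rightarrow(f)\Rightarrow(d)$, whereas you thread everything through a single cycle $(a)\Rightarrow(b)\Rightarrow(c)\Rightarrow(e)\Rightarrow(f)\Rightarrow(d)\Rightarrow(a)$. The only substantive difference is how one reaches (a): the paper goes $(f)\Rightarrow(a)$ directly, handling cases (i) and (ii) by quoting the construction of Proposition~\ref{prop1} and giving a separate argument for case (iii), while your $(d)\Rightarrow(a)$ is a single uniform construction (push the computable copy of the faithful substructure onto a dense computable $A$ via a computable bijection, put singletons outside, and read off the computable witness $S$). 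Your route is a bit cleaner here since it avoids the three-way case split; the paper's route has the minor advantage of recycling Proposition~\ref{prop1}. Two small remarks: in your $(b)\Rightarrow(c)$, the reason $S$ is an equivalence relation on $A$ is simply that $\Phi=\chi_E$ on $A\times A$ and $E$ is an equivalence relation (the $E$-faithfulness of $A$ is not needed for that particular step, only for the faithfulness clause); and in $(c)\Rightarrow(e)$ you implicitly use that an infinite c.e.\ set admits a computable bijection with $\omega$ to present $(A,S\restriction A)$ as a c.e.\ structure with universe $\omega$, which is routine.
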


\begin{proof} We will show that $(a) \implies (b) \implies (c) \implies (f) \implies (a)$ and that $(d) \implies (e) \implies (f) \implies (d)$.  
As remarked above, it is easty to see that (a) implies (b), and that (b) implies (c).    

To show that (c) implies (f), we may assume without loss of generality that $\sE = (\omega,R)$ is faithfully generically computably enumerable.  Let $S$ be a computably enumerable equivalence relation on a computably enumerable set $B$, where there is a faithful computably enumerable set $A$ of density one where $R$ agrees with $S$. If $\sA = (A,S)$ has an infinite equivalence class, then certainly $\sE$ has an infinite equivalence class. If not, then $\chi(\sA)$ is an infinite 
$\Sigma^0_2$ set. Since $\sA$ is faithful, it follows that $\chi(\sA) \subset \chi(\sE)$. If there are arbitrarily large finite classes, then $\chi(\sA)$ must have an $s_1$-function by Lemma \ref{lem5}.  If not, then there must be a finite $k$ and infinitely many classes of size 
$k$ in $\sA$. Again, since $A$ is faithful, $\sE$ also has infinitely many classes of size $k$. 

To see that (f) implies (a), first observe that in cases (i) and (ii), the proof of Proposition \ref{prop1} yields a faithfully strongly generically computable copy.  So we may suppose that $\chi(\sE)$ has no infinite classes, has no finite $k$ with infinitely many classes of size $k$, and has an infinite $\Sigma^0_2$ subset $K$ which possesses an $s_1$-function. Now, by Lemma \ref{lem4}, there is a computable structure with character $K$.  If $\chi(\sE) \setminus K$ is finite, then, in fact, $\sE$ has a computable copy.  Otherwise, we may take a computable structure $\sA = (A,R)$ with character $K$, where the computable set $A$ has asymptotic density one and is co-infinite. Let $B$ be a faithful substructure of $\sE$ with character $K$ and let $f$ be an isomorphism of the equivalence structures $(A,R)$ and $(B,E)$, so that $f$ is a permutation of $\omega$ mapping $A$ onto $B$, and thus mapping $\omega \setminus A$ onto $\omega \setminus B$. Then we may extend $\sA$ to a generically computable structure $(\omega,S)$ by letting $x S y \iff f(x) E f(y)$.  For $x,y \in A$, we have $x S y \iff f(x) E f(y) \iff x R y$. Thus $(A,R)$ is a faithful computable substructure of $(\omega,S)$, as desired. 

Certainly, (d) implies (e).  To see that (e) implies (f), let $\sA = (A,E)$ be an infinite faithful substructure of $\sE$ and let $\sB = (\omega,S)$ be a computably enumerable copy of $\sA$. We will assume that $\sE$ (and hence $\sB$ also) has no infinite equivalence class and has no finite $k$ with infinitely many classes of size $k$.  Then, by Lemma \ref{lem5}, it follows that $\sB$ (and hence $\sA$ also) has a $\Sigma^0_2$ character $K$ which possesses a computable $s_1$-function.  Since $A$ is faithful, it follows that $K$ is a subset of $\chi(\sE)$. 

To see that (f) implies (d), first note that this is trivial in cases (i) or (ii).  In case (iii), just let $\sB$ be a faithful substructure of $\sE$ with a $\Sigma^0_2$ character $K$ which possesses a computable $s_1$ function. Then $\sB$ has a computable copy by Lemma \ref{lem4}. 
\end{proof}

\section{Coarsely Computable Equivalence Structures}

In this section, we examine the notion of coarsely computable equivalence structures. 

\begin{dfn} Let $\sE = (\omega,E)$ be an equivalence structure on $\omega$.  
We say that $\sE$ is a \emph{coarsely computable equivalence structure} if there is a computable equivalence relation $R$ and a set $A$ of density one such that for $a,b \in A$, $aRb \iff aEb$. If the set $A$ is both $E$-faithful and $R$-faithful, then we say that $\sE$ is \emph{faithfully coarsely computable}.  

\end{dfn}

It is clear from the definitions that a strongly generically computable structure is also coarsely computable. Hence every equivalence structure $\sE = (\omega,E)$ has a coarsely computable copy.  Also, any structure meeting condition (f) of Theorem \ref{thm4} has a faithfully coarsely computable copy.  

We will show that not every faithfully coarsely computable structure has a faithfully generically computable copy, and that not every equivalence structure has a faithfully coarsely computable copy. 

Let $(\omega,E)$ be the canonical structure with one class of every finite size $k$. 
The equivalence classes of $(\omega,E)$ are $\left\{\{0\}, \{1,2\}, \{3,4,5\}, \dots\right\}$.  The first $k$ classes have $1 + 2 + \dotsb + k = k(k+1)/2$ elements.  Let $K$ be any set and let $A_K$ be the classes of size $k$ for $k \in K$, under $E$. 

\begin{lem} If $K$ is a dense set, then $A_K$ is also a dense set.  
\end{lem}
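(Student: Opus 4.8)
The plan is to read the structure of $A_K$ off the block structure of the canonical order $(\omega,E)$. Put $T_m = m(m+1)/2$, the number of elements contained in the first $m$ classes (as computed just above the lemma). The $E$-class of size $k$ is the block of $k$ consecutive integers from $T_{k-1}$ to $T_k-1$, so the initial segment $\{0,1,\dots,T_m-1\}$ is exactly the union of the first $m$ classes, and $A_K$ (the union of the $E$-classes of size $k$ with $k\in K$) meets that segment in precisely the blocks indexed by those $k\in K$ with $k\le m$. Hence $|A_K\cap\{0,\dots,T_m-1\}| = \sum_{k\le m,\ k\in K} k = T_m - \sum_{k\le m,\ k\notin K} k$.

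Now write $g(m) = |\{k\le m : k\notin K\}|$; the hypothesis that $K$ is dense is exactly the statement that $g(m)/m\to 0$. Combined with the crude bound $\sum_{k\le m,\ k\notin K} k \le m\,g(m)$ this gives $\rho_{T_m-1}(A_K)\ge 1 - m\,g(m)/T_m = 1 - 2g(m)/(m+1)$, which tends to $1$ as $m\to\infty$. This is the core of the argument, and it records the geometric point that the count up to $T_m$ is dominated by the large classes, i.e.\ those indexed by large $k$, of which $K$ omits only a vanishing proportion.

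To pass from this subsequence to all $n$, I would sandwich an arbitrary $n$ between consecutive triangular numbers, $T_{m-1}\le n < T_m$, and use that $j\mapsto|A_K\cap\{0,\dots,j\}|$ is nondecreasing together with $n+1\le T_m$ to get $\rho_n(A_K)\ge |A_K\cap\{0,\dots,T_{m-1}-1\}|/T_m \ge (T_{m-1} - (m-1)g(m-1))/T_m$. Since $T_{m-1}/T_m = (m-1)/(m+1)\to 1$ and $(m-1)g(m-1)/T_m \le 2g(m-1)/(m+1)\to 0$, the lower bound tends to $1$; and since $n\to\infty$ forces $m\to\infty$, we conclude $\rho(A_K)=1$, i.e.\ $A_K$ is dense.

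The only genuine obstacle is this interpolation step: between two consecutive triangular numbers the denominator $n$ can already be as large as $T_m$ while the numerator has been controlled only up to $T_{m-1}$, so one has to observe that $T_{m-1}/T_m\to 1$ — which holds because consecutive triangular numbers are asymptotically equal. Everything else is routine bookkeeping with the identity for $|A_K\cap\{0,\dots,T_m-1\}|$ and the elementary estimate $\sum_{k\le m,\ k\notin K} k \le m\,g(m)$.
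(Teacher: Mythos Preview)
Your proof is correct and follows essentially the same approach as the paper: both count, up to a triangular cutoff, the elements lying in classes whose size is \emph{not} in $K$, bound that count by (roughly) $m\cdot g(m)$ where $g(m)$ is the number of missing sizes below $m$, and conclude from $g(m)/m\to 0$. The paper uses the slightly sharper bound $n+(n-1)+\dotsb+(n-m+1)$ in place of your $m\cdot g(m)$ and omits the interpolation between consecutive triangular numbers that you spell out explicitly; your version is the more complete write-up of the same idea.
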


\begin{proof}  Suppose that the complement of $K$ contains $m$ out of the first $n$ positive numbers.
Then the classes of size $k$ with $k \in K \cap \{1,2,\dots,n\}$ contain at most $n + (n-1) + \dots +(n -m+1) = m (2n-m+1)/2$ elements out of a total of $1 + 2 + \dots +n = n (n+1)/2$.  Then the ratio is $\frac  mn \cdot \frac{2n-m+1}{n+1} \leq 2m/n$.  Thus, if 
$\omega \setminus K$ has density zero, then $A_K$ will have density 1. 
\end{proof}

\begin{lem} For any dense co-infinite set $K$, there is a faithful coarsely computable structure with character $\{(k,i):  k \in K, i \leq 2\}$.
\end{lem}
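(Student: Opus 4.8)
The plan is to exhibit, for the given $K$, an explicit equivalence structure $\sE=(\omega,E)$ of character $\{(k,i):k\in K,\ i\le 2\}$ and to show it is faithfully coarsely computable via the \emph{canonical} computable structure introduced just above. I would take $R$ to be the equivalence relation of that canonical structure, whose classes are the consecutive blocks $C_1=\{0\}$, $C_2=\{1,2\}$, $C_3=\{3,4,5\},\dots$, so that $|C_k|=k$, $R$ is computable, and $R$ has exactly one class of each finite size and no infinite class. Put $A=A_K=\bigcup_{k\in K}C_k$. Since $K$ is dense, the preceding lemma gives that $A$ has asymptotic density one; moreover $A$ is a union of $R$-classes, hence $R$-faithful, and since $K$ is co-infinite the set $\omega\setminus A=\bigcup_{k\notin K}C_k$ is infinite.

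Next I would build $E$ so that it agrees with $R$ on $A$ and uses $\omega\setminus A$ to supply a second copy of each class. Let $k_0<k_1<k_2<\dots$ enumerate $K$. Because $\omega\setminus A$ is infinite, listing it in increasing order and cutting it into consecutive runs of lengths $k_0,k_1,k_2,\dots$ produces finite blocks $B_0,B_1,B_2,\dots$ with $|B_j|=k_j$ that exhaust $\omega\setminus A$. Let $E$ be the equivalence relation on $\omega$ whose classes are precisely $\{C_k:k\in K\}\cup\{B_j:j\in\omega\}$; these sets partition $\omega$, so $E$ is well defined.

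Then I would verify the three requirements. For each $k\in K$ the $E$-classes of size $k$ are $C_k$ and the unique $B_j$ with $k_j=k$, and there are no further finite classes and no infinite class, so $\chi(\sE)=\{(k,i):k\in K,\ i\le 2\}$. Every $E$-class lies entirely inside $A$ (the classes $C_k$, $k\in K$) or entirely inside $\omega\setminus A$ (the classes $B_j$), so $A$ is $E$-faithful; it is $R$-faithful as already noted; and $R$ and $E$ induce the same partition $\{C_k:k\in K\}$ of $A$, so they agree on $A$. Since $R$ is computable and $A$ has density one --- and the definition of coarse computability asks nothing about the computability of $A$ --- it follows that $\sE$ is a faithful coarsely computable structure of the required character.

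I expect the only delicate point to be the repartitioning step, which relies on $K$ being \emph{co-infinite}: that is exactly what forces $\omega\setminus A$ to be infinite, so that infinitely many blocks $B_j$ fit and their union is all of $\omega\setminus A$, while the unboundedness of the $k_j$ rules out a leftover piece or a stray class whose size falls outside $K$. The density-one property of $A$ I would quote from the preceding lemma rather than reprove. (Collapsing $\omega\setminus A$ to a single infinite class would also yield a structure of this character, but the construction above is preferable because the resulting $\sE$ has no infinite class, as is wanted for the intended application.)
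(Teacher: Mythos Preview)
Your argument is correct and follows essentially the same route as the paper: take the canonical computable structure as the witness $R$, let $A=A_K$ (dense by the preceding lemma, $R$-faithful by construction), and repartition the infinite complement $\omega\setminus A$ into one block of each size $k\in K$ to obtain the second copy of each class. The paper's proof is terser but identical in substance.

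One small correction to your closing parenthetical: collapsing $\omega\setminus A$ to a single infinite class would \emph{not} yield a structure of the required character, since then $\sE$ would have only \emph{one} class of size $k$ for each $k\in K$, so $(k,2)\notin\chi(\sE)$. This aside does not affect your main argument.
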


\begin{proof}  Let $(\omega,E)$ be the canonical computable structure described above with one class of every finite size $k$. 
Let $A_K$ be the dense subset of $\omega$ which will have character $\{k,1): k \in K\}$ under $E$.  Then take $\omega \setminus A_K$ and partition it into exactly one class of size $k$ for $k \in K$.  This defines a faithfully coarsely computable structure $(\omega,R)$ with the desired character so that $R$ agrees with $E$ on the dense set $A_K$.
\end{proof}

\begin{lem} There is dense set $K$ with no infinite $\Sigma^0_2$ subset. 
\end{lem}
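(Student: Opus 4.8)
The plan is to construct the \emph{complement} $D = \omega \setminus K$ directly, as a set of asymptotic density $0$ that meets every infinite $\Sigma^0_2$ subset of $\omega$. Once this is done, $K$ has asymptotic density $1$, and if some $S \subseteq K$ were an infinite $\Sigma^0_2$ set, then $D$ would contain a point of $S \subseteq K = \omega \setminus D$, which is absurd. The observation that makes this easy is that $K$ carries no effectivity requirement at all — indeed it cannot be $\Sigma^0_2$, since then it would be its own infinite $\Sigma^0_2$ subset — so a plain non-effective diagonalization suffices, with no need for any finite-injury machinery.

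To carry this out, fix an enumeration $S_0, S_1, S_2, \dots$ of \emph{all} $\Sigma^0_2$ subsets of $\omega$; such an enumeration exists, e.g.\ from a universal $\Sigma^0_2$ predicate $x \in S_e \iff \exists y\, \forall z\, R(e,x,y,z)$ with $R$ computable, or equivalently by Post's theorem via $S_e = W_e^{\emptyset'}$. Now define $D$ by selecting, for each $e$ for which $S_e$ is infinite, the least $d_e \in S_e$ with $d_e > 2^{e+1}$ (this exists precisely because $S_e$ is infinite), and put $D = \{\, d_e : e \in \omega \text{ and } S_e \text{ is infinite}\,\}$ and $K = \omega \setminus D$.

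It then remains to verify two things. For density: if $d_e < n$ then $2^{e+1} < n$, hence $e < \log_2 n$, so $|D \cap \{0,1,\dots,n-1\}| \le \log_2 n$ and $\rho_n(D) \le (\log_2 n)/n \to 0$; thus $D$ has asymptotic density $0$ and $K$ has asymptotic density $1$. For the immunity property: given any infinite $\Sigma^0_2$ set $S$, we have $S = S_e$ for some $e$ with $S_e$ infinite, so $d_e \in S_e \cap D = S \cap D$, whence $S \not\subseteq \omega \setminus D = K$. Therefore $K$ has no infinite $\Sigma^0_2$ subset. (Moreover, infinitely many indices $e$ give an infinite $S_e$ — all cofinite sets occur in the list, for instance — so $D$ is infinite and $K$ is co-infinite, which is the form in which the preceding lemma will apply it.)

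There is no genuine obstacle here; the two things to get right are (i) invoking a uniform list of all $\Sigma^0_2$ sets (a universal $\Sigma^0_2$ predicate, or Post's theorem), and (ii) spacing the chosen witnesses so that $d_e$ grows fast enough — putting $d_e$ beyond $2^{e+1}$ — so that the counting function $n \mapsto |D \cap \{0,\dots,n-1\}|$ is $O(\log n)$ and $D$ genuinely has density zero; merely insisting $d_e > e$, say, would not be enough.
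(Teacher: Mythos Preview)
Your proof is correct and follows essentially the same approach as the paper: enumerate the $\Sigma^0_2$ sets, and from each one remove a single witness chosen beyond an exponentially growing threshold so that the removed set has density zero. The paper's version is terser (it omits the least member of $S_i$ greater than $2^i$ without separately handling the finite case), but the idea and the density estimate are the same.
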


\begin{proof} This is just a generalization of the existence of an immune set. 
Let $S_1, S_2, \dots$ enumerate the $\Sigma^0_2$ sets and define $K$ to omit the least member of $S_i$ which is greater than $2^i$.  Then $K$ must contain at least $2^i - i$ of the first $2^i$ numbers and hence has density one. 
\end{proof}

It follows that not every faithful coarsely computable equivalence structure has a faithful generically computable copy. 

Finally, we show that there are equivalence structures which  do not have faithful coarsely computable copies.

\begin{thm} There is an infinite set $K \subset \omega$ such that if $\C = (\omega,R)$ is a computably enumerable equivalence structure such that 
  $\{x: |[x]_R\ = k\}$ has asymptotic density zero for any $k$, and such that if $D$ is a set of asymptotic density one, then $D$ is not a subset of 
$\{x: |[x]_R| \in K \}$. Thus any equivalence structure $\sA$ with character $\chi(\sA) \subset K \times \{1\}$ cannot be faithfully coarsely computable. 
\end{thm}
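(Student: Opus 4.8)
The plan is to separate a soft \emph{reduction} from the real content, a direct \emph{construction} of $K$.

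\emph{Reduction.} I would first argue that the concluding sentence follows from the first part. Suppose $\sA=(\omega,E)$ has character contained in $K\times\{1\}$ and is faithfully coarsely computable, via a computable equivalence relation $R$ and a density-one set $A$ that is both $E$-faithful and $R$-faithful and on which $R$ and $E$ agree. Faithfulness gives $[x]_R\subseteq A$ and $[x]_E\subseteq A$ for $x\in A$, and then agreement on $A$ forces $[x]_R=[x]_E$ for every $x\in A$. Since each $E$-class has size in $K$, we get $A\subseteq\{x:|[x]_R|\in K\}$, so that set has density one; and each $\{x:|[x]_R|=k\}$ with $k$ finite is the union of a set of size at most $k$ inside $A$ (contained in the unique $E$-class of size $k$) and a subset of the density-zero set $\omega\setminus A$, hence has density zero (likewise $\{x:|[x]_R|=\infty\}\subseteq\omega\setminus A$). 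Thus $\mathcal C=(\omega,R)$ together with $D=A$ would contradict the first part of the theorem, so it suffices to build $K$ with the asserted property.

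\emph{Construction.} I would fix an enumeration $R_0,R_1,\dots$ of the equivalence closures of $W_0,W_1,\dots$ (viewing $W_e$ as a subset of $\omega\times\omega$, so that every c.e.\ equivalence relation on $\omega$ occurs), and build $K=\{k_0<k_1<\dots\}$ in stages, noneffectively — the theorem asks only for existence, so I am free to decide arithmetical facts about the $R_e$ and to read off their true class sizes. At the stage handling requirement $\mathcal R_e$, namely ``if every finite level set of $R_e$ has density zero then $\{x:|[x]_{R_e}|\notin K\}$ does not have density zero,'' let $L=k_{m-1}$ be the largest element placed so far and write $\sigma(x)=|[x]_{R_e}|$. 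If some finite level set of $R_e$ has positive upper density, $\mathcal R_e$ is vacuous; if $\{x:\sigma(x)=\infty\}$ has positive upper density, then $\{x:\sigma(x)\notin K\}$ already contains it and so has positive upper density; in both cases I put a dummy $k_m>L$ into $K$ and pass on. Otherwise every finite level set of $R_e$ and $\{x:\sigma(x)=\infty\}$ have density zero, so $\{x:\sigma(x)\le L\}$ (a finite union of density-zero sets) and $\{x:\sigma(x)=\infty\}$ are both density zero, whence $\{x:L<\sigma(x)<\infty\}$ has density one. I would then pick $n$ with $\rho_n(\{x:L<\sigma(x)<\infty\})>\tfrac45$, set $W=\max\{\sigma(x):x\le n,\ L<\sigma(x)<\infty\}$ (a finite number), and put $k_m:=W+1$. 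Now more than $\tfrac45(n+1)$ of the $x\le n$ have $\sigma(x)\in(L,W]=(k_{m-1},k_m)$, an interval disjoint from $K$ and, since every later element exceeds $k_m$, permanently disjoint from $K$; hence $\rho_n(\{x:\sigma(x)\notin K\})>\tfrac45$ forever, and $\mathcal R_e$ is met. Every earlier reserved interval $(k_{j-1},k_j)$ likewise stays disjoint from $K$, so no finished requirement is injured, and $K$ is infinite because a new element is added at each stage.

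\emph{Main difficulty.} The two points I would be most careful about are conceptual rather than computational. First, one must notice that the requirement asks only for \emph{positive upper density} of the error set $\{x:|[x]_R|\notin K\}$, not density one; this is exactly what lets a single reserved window $(k_{m-1},k_m)$, chosen \emph{after} inspecting $R_e$, suffice, since it can swallow a fixed fraction of one initial segment $[0,n]$. Second, working noneffectively is what dissolves the obstacle that would otherwise force a priority argument: because a c.e.\ equivalence class can keep growing, a stage-$s$ size is not the final size, but since we may read off the true $\sigma$ we never have to worry about a class ``overshooting'' the window we reserved. What remains is the routine but slightly fussy bookkeeping of the case split — confirming that the ill-formed and infinite-class cases are genuinely harmless and that dummy insertions keep $K$ infinite without disturbing earlier requirements — together with the elementary density facts used above.
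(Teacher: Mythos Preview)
Your reduction is correct and in fact cleaner than the paper's: you show directly that every level set $\{x:|[x]_R|=k\}$ has density zero, whereas the paper only rules out positive \emph{lower} density and then invokes the first part, which strictly speaking needs density zero.

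The construction, however, has a genuine gap. You argue that reserving the single finite window $(k_{m-1},k_m)=(L,W]$ gives $\rho_n(\{x:\sigma(x)\notin K\})>\tfrac45$ for the one $n$ you picked, and conclude that $\mathcal R_e$ is met. But the requirement asks that $\{x:\sigma(x)\in K\}$ fail to have asymptotic density one, i.e.\ that the error set have positive \emph{upper} density $\limsup_m \rho_m>0$; a large value of $\rho_n$ at a single $n$ does not give this. Worse, the defect is intrinsic to the strategy: the set you have permanently secured inside the error set is $\{x:\sigma(x)\in(L,W]\}$, a \emph{finite} union of level sets each of density zero, so it has density zero itself. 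Thus no single finite gap can ever witness the requirement, regardless of how it is chosen. (Concretely, the c.e.\ structure with exactly one class of every size $\ge 1$ satisfies your hypotheses, yet for every choice of $k_m$ the set $\{x:L<\sigma(x)<k_m\}$ has density zero.)

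This is exactly why the paper's construction removes an \emph{infinite} set of sizes at each stage: by pigeonhole, some residue class $j\bmod 2^{e+2}$ has $\{x:|[x]_e|\equiv j\ (\mathrm{mod}\ 2^{e+2})\}$ of upper density at least $2^{-e-2}$, and deleting that residue class from $K$ (save one element, to keep $K$ infinite) forces $\{x:\sigma(x)\in K\}$ to have lower density at most $1-2^{-e-2}$ for all time. Your approach could be repaired by revisiting each $\mathcal R_e$ at infinitely many stages and reserving a fresh gap each time, but that requires interleaving the requirements; handling each one once with a finite window cannot succeed.
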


\begin{proof} Let $\C_e: = (\omega, S_e)$ be the $e^{th}$ computably enumerable equivalence structure.  That is, let $W_e$ be the $e^{th}$ computably enumerable set, and let $S_e$ be the reflexive, symmetric, transitive closure of $\{(x,y): \langle x,y \rangle \in W_e$. Let $[x]_e$ denote the equivalence class of $x$ in $\C_e$.  We need to meet the following requirement.

\medskip

{\bf Requirement $R_e$}: If  $\{x: |[x]_e|\ = k\}$ has asymptotic density zero for all $k$, then $\{x: |[x]_e| \in K\}$ does not have asymptotic density one. 

\medskip
 
We begin the construction with $K^0 = \omega$ and remove numbers at certain stages to accomplish the requirements.  At the same time, we need to ensure that $K$ is infinite.  So the construction will preserve an element of $K$ each time that it removes an infinite number of elements.  

We will show how to satisfy an individual requirement by the case $e=0$.  Let  $\sC = (\omega,S_0)$, let $S = S_0$, and consider the four sets $A_i = \{x: |[x]_S| = i\ mod\ 4\}$ for $i=0,1,2,3$. Since the union of the sets equals $\omega$, at least one of the sets, say $A_j$,  must have upper asymptotic density at least $1/4$. Let us suppose that $\{x: |[x]_R|\ = k\}$ has asymptotic density zero for all $k$, so that we need to take action on requirement $R_0$.  Then we will ensure that $K \cap \{i: i = j \ mod\ 4\} = \{4+j\}$; that is, we let $K^1 = \{4+j\} \cup \{k: k \neq j\ mod\ 4\}$ and maintain $K \cap \{i: i = j \ mod\ 4\} = \{4+j\}$ throughout the construction. 
Then $\{x: |[x]_R| \in K\}$ must have density at most $3/4$, so that it cannot contain any set $D$ has asymptotic density one.

The general construction of $K$ is in stages. After stage $e$, we will have designated, for certain $i \leq e$, a value $j(i)$ and corresponding set $A_i = \{x: |[x]_i| = j(i)\ mod\ 2^{i+2}\}$, so that for $i \neq h$, we have $A_i \cap A_h = \emptyset$. We will have removed $K_i = \{m: m = j(i)\ mod\ 2^{i+2}\}$ from $K$, except for $2^{i+2} + j(i)$, for such $i$, resulting in the set $K^s$. Note that we will have removed at most one set $K_i$  mod $2^{i+2}$ for each $i \leq e$, for a total of at most $2^e + 2^{e-1}  + \dotsb +  1 < 2^{e+1}$ classes mod $2^{e+2}$, resulting in the set $K^e$. Thus, there remain $2^{e+1}$ classes mod $2^{e+2}$ to work with, each disjoint from the previous classes.  At stage $e+1$, we will ensure Requirement $R_e$ (if necessary) by removing a set of class sizes from $K$. If there exists $k$ such that $\{x: |[x]_{e+1}| = k\}$ has positive measure, then we take no action.  If not, then we select $j = j(e+1) < 2^{e+3}$ such that $A_{e+1} = \{x: |[x]|_{e+1} = j\ mod\ 2^{e+3}\}$ has upper density at least $2^{-e-3}$ and we let $K_{e+1} = \{m: m = j(e+1)\ mod\ 2^{e+3}\}$.  If $K_{e+1}$ meets one of the previous classes $K_i$, then in fact $K_{e+1} \subset K_i$, so that we have already removed all but one element of $K_{e+1}$ from $K$ by stage $s$.  Otherwise, we remove 
$K_{e+1} = \{m: m = j\ mod\ 2^{e+3}\}$ from $K^e$, except for $2^{e+3} + j$, to obtain $K^{e+1}$. 

Let $K = \cap_s K^s$.  It remains to check that $K$ satisfies each Requirement $R_e$ and is an infinite set. 

First we show that action is taken infinitely often. Suppose, by way of contradiction, that no action is taken after stage $e$. Then $K$ will consist of a finite number of equivalence classes modulo $2^{e+2}$ plus a finite set. Thus $K$ will  be computable.  Hence there is some $i$ such that $\C_i$ consists of exactly one class of size $k$ for each $k \in K$. Thus at stage $i$, when we select $j$ such that 
$\{x: |[x]_i|= j \ mod\ 2^{i+2}\}$ has positive upper density in $\C_i$, and consider $K_i = \{m: m = j\ mod\ 2^{i+2}\}$, we would have $K_i \subset K \subset K^{i+1}$. But then we would have taken action and removed all but one value of $K_i$ from $K$.    

Next we need to check that $K$ is infinite.  Since action was taken infinitely often, we have preserved in $K$ an element $2^{i+2} + j(i)$ of $K_i$ for infinitely many $i$. Since the sets $\{K_i: i \in \omega\}$ are disjoint, this element is never removed at any later stage. Hence $K$ is infinite.

Now suppose that $\left\{x: \left|[x]_e\right|\ = k\right\}$ has asymptotic density zero for all $k$, and suppose, by way of contradiction, that $\{x: |[x]_e| \in K\}$ has 
asyptotic density one.  Then at stage $e$ of the construction we will have selected $j < 2^{e+2}$ such that $A_j = \{x: |[x]|_e = j\ mod\ 2^{e+2}\}$ has upper density at least $2^{-e-2}$, and defined \[K_e = \{m: m = j\ mod\ 2^{e+2}\}\].  Since $K \subset K^{e-1}$, it follows that $K_e$ is disjoint from all previous $K_i$. So we will remove all but one element of $K_e$ from $K$ at stage $e$.  It follows that $\{x: |[x]_e| \in K\}$ has lower density at most $1 - 2^{-e-2}$. 

Finally, suppose that $\sA = (\omega,S)$ has character $\chi(\sA) \subseteq K \times \{1\}$ and is faithfully coarsely computable. Let $\sC = (\omega,R)$ be a computable equivalence structure, and let $D$ be an $S$-faithful,  $R$-faithful set of density one such that $R$ and $S$ agree on $D$. Since $D$ is $S$-faithful, $D \subseteq \{x: |[x]|_S \in K\}$.   Since $R$ and $S$ agree on $D$, and $D$ is $R$-faithful, it follows that $D \subseteq \{x: |[x]|_R \in K\}$. Suppose first that there is some $k$ such that $\{x: |[x]_R| = k\}$ has positive lower density for some $k$.  Since $D$ contains at most one class of size $k$, this means that $D$ cannot have density one. Otherwise, by the first part of our theorem, $D \subseteq \{x: |[x]_R| \in K\}$ implies that $D$ cannot have asymptotic density one. 
\end{proof}

\section{Generically and Coarsely Computable Isomorphisms}

In this section, we consider isomorphisms that are \emph{generically} or \emph{coarsely} computable. 
So we first need to extend these notions from sets and relations to functions. 

\begin{dfn} Let $f: \omega \to \omega$ be a total function. 
\begin{enumerate}
\item We say that $f$ is \emph{generically computable} if there  is a partial computable function $\phi$ such that
  $\phi = f$ on the domain of $\phi$, and such that the domain of
  $\phi$ has asymptotic density 1.
\item We say that $f$ is \emph{coarsely computable} if there is a total computable function  $\phi$ such that $\{n: f(n) = \phi(n)\}$ has asymptotic density 1.
\end{enumerate}
\end{dfn}

It is easy to see that a set is generically computable if and only if $\chi_S$ is generically computable and likewise for coarsely computabile.

\begin{dfn}  Two structures $\sA$ and $\sB$ are said to be \emph{generically computably isomorphic} if there is an isomorphism $f: \sA \to \sB$ 
and a partial computable function $\theta$ such that both the domain and the range of $\theta$ have asymptotic density one, and $\theta(x) = f(x)$ whenever $\theta(x)$ is defined. 
\end{dfn}

\begin{prop} Two structures $\sA$ and $\sB$ are generically
 computably isomorphic if and only if there is an isomorphism $f: \sA \to \sB$ 
such that both $f$ and $f^{-1}$ are generically computable.
\end{prop}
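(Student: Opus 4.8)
The plan is to prove the two implications separately. Throughout I will use the elementary fact that if $\theta$ is a partial computable function whose graph is contained in that of a bijection, then $\theta$ is injective and its partial inverse $\theta^{-1}$ is again partial computable: on input $y$ one dovetails the computations $\theta(x)$ over all $x$ and all stages and outputs the unique $x$ with $\theta(x)=y$ once it appears. Note also the trivial identities $\mathrm{dom}(\theta^{-1})=\mathrm{ran}(\theta)$ and $\mathrm{ran}(\theta^{-1})=\mathrm{dom}(\theta)$.

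For the forward direction, suppose $f\colon\sA\to\sB$ is an isomorphism and $\theta$ is a partial computable function with $\mathrm{dom}(\theta)$ and $\mathrm{ran}(\theta)$ of asymptotic density one and $\theta(x)=f(x)$ throughout $\mathrm{dom}(\theta)$, i.e.\ $\theta\subseteq f$ as graphs. Then $\theta$ itself witnesses that $f$ is generically computable. Since $f$ is a bijection and $\theta\subseteq f$, the function $\theta$ is injective, so $\theta^{-1}$ is partial computable; it agrees with $f^{-1}$ on $\mathrm{dom}(\theta^{-1})=\mathrm{ran}(\theta)$, a set of density one. Hence $f^{-1}$ is generically computable as well.

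For the converse, suppose $f$ is an isomorphism and we are given partial computable functions $\phi\subseteq f$ and $\psi\subseteq f^{-1}$ with $\mathrm{dom}(\phi)$ and $\mathrm{dom}(\psi)$ of density one. The subtlety is that $\phi$ by itself only guarantees a dense domain, not a dense range, so we must bring in $\psi$. By the observation above, $\psi^{-1}$ is partial computable, is contained in $f$, and has range $\mathrm{dom}(\psi)$ of density one. Put $\theta=\phi\cup\psi^{-1}$, computed by running $\phi(x)$ and $\psi^{-1}(x)$ in parallel and returning the first value found; since $\phi$ and $\psi^{-1}$ both restrict $f$ they agree wherever both are defined, so $\theta$ is a well-defined partial computable restriction of $f$. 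Its domain contains $\mathrm{dom}(\phi)$ and its range contains $\mathrm{ran}(\psi^{-1})$, so both are dense, and $\theta$ together with $f$ witnesses that $\sA$ and $\sB$ are generically computably isomorphic.

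I do not expect a genuine obstacle. The only point requiring a moment's thought is the converse: a single $\theta$ with dense domain \emph{and} dense range must be manufactured not from the witness for $f$ alone but by gluing it to the inverse of the witness for $f^{-1}$, which is legitimate precisely because inverting a partial computable injection stays inside the partial computable functions.
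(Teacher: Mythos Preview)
Your proof is correct and follows essentially the same approach as the paper: in the forward direction you invert the witness $\theta$ to obtain a witness for $f^{-1}$, and in the converse you glue $\phi$ together with $\psi^{-1}$ (the paper phrases this as taking ``the unique $b$ such that $\psi(b)=a$'') to obtain a single partial computable restriction of $f$ with both dense domain and dense range. Your treatment is slightly more explicit about why the inverse of a partial computable injection remains partial computable, but otherwise the arguments coincide.
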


\begin{proof} Suppose first that $\sA$ and $\sB$ are generically computably isomorphic and let $f$ and $\phi$ be given as in the defintion.  Then $f$ is certainly generically computable. Define the partial computable function $\psi$ to be $\phi^{-1}$, that is, $\psi(b) = a$ if $\phi(a) = b$.
Then if $\psi(b) = a$, it follows that $f(a) = b$ and therefore $f^{-1}(b) = a$. The domain of $\psi$ equals the range of $\phi$ and is therefore asymptotically dense. 

For the other direction, suppose that both $f$ and $f^{-1}$ are generically computable  isomorphisms.  Let $\theta$ and $\psi$ be partial computable functions with domains of asymptotic density one, such that
$\theta(a) = f(a)$ whenever it is defined, and $\psi(b ) = f^{-1}(b)$ whenever it is defined.
Then we may define an extension $\phi$ of $\theta$ by letting $\phi(a)$ equal either $\theta(a)$ or the
(unique) $b$ such that $\psi(b) = a$, whichever converges first. If both of these converge, then the value $b$
must equal $f(a)$. 
\end{proof}

\begin{dfn} \rule{0in}{0in}
  \begin{enumerate}
\item Let $f: \sA \to \sB$ be an isomorphism between two structures.  $f$ is said to be a \emph{coarsely computable} isomorphism if there is a  total computable function $\theta$ such the set $C = \{x: \theta(x) = f(x)\}$ is asymptotically dense and the image $f[C]$ also has asymptotic density one.  
\item $\sA$ and $\sB$ are said to be \emph{weakly coarsely computably isomorphic} if there is a set $C$ of asymptotic density one, a set isomorphism $f: \sA \to \sB$ and a total computable function $\theta$ which satisfy the following:
\begin{itemize}
\item[(i)] $C$ is the universe of a substructure $\sC$ of $\sA$;
\item[(ii)] $f(x) = \theta(x)$ for all $x \in C$;
\item[(iii)] $f[C]$ has asymptotic density one;
\item[(iv)] $\theta$ is an isomorphism from $\sC$ to its image.   
\end{itemize}
\end{enumerate}
\end{dfn}

For example, if $\sA$ and $\sB$ are equivalence structures, each having infinitely many classes of size 3, and the rest of $\sA$ and of $\sB$  consist of one class of size $4+n$ for each $n$, and it happens that the classes of size 3 in $\sA$ make up a dense computable set $C$ and the classes of size 3 in $\sB$ make up a dense computable set $D$, then we can define the computable map $\theta$ to map $\sC$ to $\sD$ preserving the classes, and to arbitrarily map the complements.  The set isomorphism $f$ can then agree with $\theta$ on $C$, but define an isomorphism of the complements, preserving the classes. In general, there may be no such $f$ which is computable.  

\begin{dfn} We say that $\sA = (\omega,E)$ has \emph{generic character} $K$ for a finite subset $K$ of $\omega \setminus \{0\}$ if, for each $k \in K$, the set $\sA(k)$ of elements of type $k$ has positive asymptotic density and the union $\bigcup_{k \in K} \sA(k)$ has asymptotic density 1. 
\end{dfn}

Thus if the generic character of $\sE$ is $\{k\}$ for some $k \leq \omega$, then the elements of $\sE$ of type $k$ has asymptotic density one. 

The classic example of a simple computable equivalence structure which is not computably categorical is one which consists of infinitely many classes of size one and infinitely many classes of size two.  Indeed, there are computable structures of this kind which are not computably isomorphic. We will call such an equivalence structure a \emph{$(1,2)$-structure}. The next result shows that under certain density conditions two such structures will be generically computably isomorphic. 

\begin{thm} Suppose that $\sA$ and $\sB$ are computable $(1,2)$ equivalence structures, each having generic character $\{2\}$.  Then $\sA$ is generically computably isomorphic to a computable structure in which the set of elements of size $2$ is computable, and therefore $\sA$ and $\sB$ are generically computably isomorphic. 
\end{thm}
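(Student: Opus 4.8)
The plan is to produce a single canonical target structure and route both $\sA$ and $\sB$ to it. First I would record the two facts about a $(1,2)$-structure with generic character $\{2\}$ that do the work: the set $\sA(2)$ of elements lying in classes of size $2$ is computably enumerable (enumerate $E$, and whenever $a\,E\,b$ with $a\neq b$ appears we have found a complete size-$2$ class, since no class has size $>2$), and $\sA(2)$ has asymptotic density one while $\sA(1)=\omega\setminus\sA(2)$ has density zero. Thus we get a uniformly computable enumeration $\{x_0,y_0\},\{x_1,y_1\},\dots$ of the (pairwise disjoint) size-$2$ classes of $\sA$, with $\bigcup_k\{x_k,y_k\}=\sA(2)$. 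I would then fix, independently of $\sA$ and $\sB$, the computable $(1,2)$-structure $\sD$ whose singleton classes are exactly $\{2^n\}$ for $n\geq 1$ and whose remaining universe $\sD(2)=\omega\setminus\{2^n:n\geq 1\}$ is split into size-$2$ classes $\{e_{2k},e_{2k+1}\}$, where $\sD(2)=\{e_0<e_1<\cdots\}$. Since $\{2^n:n\geq 1\}$ has density zero, $\sD(2)$ is a computable set of density one, so in $\sD$ the set of elements of size $2$ is computable.

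Next I would define the partial computable isomorphism onto $\sD$. On input $a$, search the enumeration above until $a$ appears in some pair; if it appears as the $k$-th pair $\{x_k,y_k\}$ with $x_k<y_k$, output $e_{2k}$ for $a=x_k$ and $e_{2k+1}$ for $a=y_k$ (these are computable from $k$ because $\sD(2)$ is computable); if $a$ never appears, diverge. Call this $\theta_\sA$. Then $\mathrm{dom}(\theta_\sA)=\sA(2)$ and $\mathrm{range}(\theta_\sA)=\sD(2)$, both of density one, and $\theta_\sA$ is a bijection of $\sA(2)$ onto $\sD(2)$ sending each size-$2$ class of $\sA$ to a size-$2$ class of $\sD$. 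Extending $\theta_\sA$ by any bijection between the two infinite sets of singleton classes $\sA(1)$ and $\sD(1)$ yields an isomorphism $f_\sA\colon\sA\to\sD$ with $\theta_\sA\subseteq f_\sA$, so $\sA$ is generically computably isomorphic to $\sD$. This already gives the first assertion.

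For the ``therefore'', run the same construction on $\sB$ to get a partial computable $\theta_\sB$ with domain $\sB(2)$, range $\sD(2)$, extending an isomorphism $f_\sB\colon\sB\to\sD$. The essential observation is that both $\theta_\sA$ and $\theta_\sB$ have range exactly the fixed computable density-one set $\sD(2)$. Since $\theta_\sB$ is an injection with computable graph, $\theta_\sB^{-1}$ is partial computable with domain $\sD(2)$ and range $\sB(2)$, and I would set $\theta:=\theta_\sB^{-1}\circ\theta_\sA$. Then $\theta$ is partial computable with $\mathrm{dom}(\theta)=\sA(2)$ and $\mathrm{range}(\theta)=\sB(2)$ --- here is where the common target matters, since $\mathrm{range}(\theta_\sA)=\sD(2)=\mathrm{dom}(\theta_\sB^{-1})$ forces the composite to be total on $\sA(2)$ --- both of density one, and $\theta$ agrees with the isomorphism $f_\sB^{-1}\circ f_\sA\colon\sA\to\sB$. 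Hence $\sA$ and $\sB$ are generically computably isomorphic.

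The delicate point, and the only place any care is needed, is ensuring that the partial computable map has range of density one: this is exactly why the target $\sD$ must be chosen with a density-zero computable set of singletons and why the size-$2$ slots must be filled from the bottom. It is also why the last step uses the same $\sD$ for both structures rather than invoking transitivity of generic computable isomorphism in general --- a preimage of a density-one set under a computable injection need not have density one, so the naive composition of two generic computable isomorphisms need not work, but routing both through the fixed computable dense set $\sD(2)$ sidesteps this.
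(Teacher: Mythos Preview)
Your proof is correct and follows essentially the same approach as the paper: enumerate the size-$2$ classes, map them computably onto the size-$2$ classes of a fixed canonical $(1,2)$-structure whose type-$2$ set is computable and dense, extend arbitrarily on the singletons, and then compose through that common target to reach $\sB$. Your canonical $\sD$ (singletons at powers of $2$) differs cosmetically from the paper's $\sC$ (singletons at perfect squares), and your closing remark on why one must route both structures through the same target rather than appeal to a general transitivity of generic computable isomorphism is a welcome addition that the paper leaves implicit.
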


\begin{proof} The elements in $\sA$ of type 2 form a computably enumerable set, so the classes of size 2 may be computably enumerated as $\{a_0,b_0\}, \{a_1,b_1\}, \dots$. That is, there is a computable enumeration of the set of pairs $\{\la x,y\ra: x \neq y\ \&\ E(x,y)\}$. At the same time our standard model $\sC$ can have the classes of size 2  make up a computable set of asymptotic density one, for example, the classes $\{n^2+i,n^2 +i+1\}$ where $1 \leq i < 2n$ for each $n \geq 1$; enumerate these in order as $\{c_0,d_0\}, \{c_1,d_1\}$, and so on.  Then the partial computable function $\phi$ may be defined so that $\phi(a_n) = c_n$ and $\phi(b_n) = d_n$; the inverse of $\phi$ is also partial computable.  This partial isomorphism  can be extended arbitrarily on the classes of size one to produce a generically computable isomorphism $f: \sA \to \sC$.

 For the next part, we will have as above a generically computable isomorphism $G: \sC \to \sB$ and a corresponding partial computable $\Psi$ mapping the asymptotically dense set of elements of type two from $\sC$ onto the elements of $\sB$  of type two.  Then the composition $\Psi \circ \Phi$ will be a partial computable function mapping the elements of $\sA$ of type two onto the elements of $\sB$ of type two and hence $G \circ F$ will be a generically computable isomorphism from $\sA$ to $\sB$.  
\end{proof}

This result can be generalized to structures having generic character $\{k\}$ and only finitely many classes of size $>k$.  On the other hand, if $\sA$ and $\sB$ have generic character $\{k\}$ but have infinitely many classes of sizes larger than $k$, then no similar result holds.

\begin{thm} \label{thm12} For any finite $k$, there exist computable $(1,2)$ structures $\sA$ and $\sC$, both having generic character $\{1\}$, which are not generically computably isomorphic. 
\end{thm}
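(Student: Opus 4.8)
The plan is to take $\sA$ to be a fixed ``trivial'' computable $(1,2)$-structure in which $A(1)$ is computable and of density one (for instance, with the classes of size two being $\{4^j, 4^j+1\}$ for $j \geq 1$ and all other elements singletons), and to build a computable $(1,2)$-structure $\sC$ of generic character $\{1\}$ in which $C(1)$ is \emph{not} generically computable. Since $C(2)$ will have density zero, this last condition is equivalent to saying that $C(1)$ has no computably enumerable subset of density one. This will suffice, by a short reduction. If $\sA$ and $\sC$ were generically computably isomorphic, say via an isomorphism $f$ and a partial computable $\theta$ with $\theta \subseteq f$ and with both $\mathrm{dom}(\theta)$ and $\mathrm{range}(\theta)$ of density one, then $\theta$ is injective, $\theta^{-1}$ is partial computable, and since $f$ preserves the size of each class we have $y \in C(1) \iff \theta^{-1}(y) \in A(1)$ for every $y \in \mathrm{range}(\theta)$; hence $\phi(y) := \chi_{A(1)}(\theta^{-1}(y))$ is a partial computable function agreeing with $\chi_{C(1)}$ on the density-one set $\mathrm{range}(\theta)$, so that $C(1)$ would be generically computable --- a contradiction. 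So the whole theorem reduces to the construction of $\sC$.

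To build $\sC$, reserve a computable set of density zero --- say the powers of two --- to serve as ``partner slots'', indexed computably and injectively by pairs $(z,t)$ through a function $\sigma$, and partition the rest of $\omega$ into consecutive blocks $B_0, B_1, \dots$ whose lengths grow so fast that each $B_m$ fills all but a vanishing fraction of the initial segment below its right endpoint. Assign block $B_{\langle e,j \rangle}$ to requirement $N_e$, so that $N_e$ owns the set $R_e := \bigcup_j B_{\langle e,j \rangle}$, which has upper density one. Now enumerate the computably enumerable sets $W_0, W_1, \dots$. The requirement $N_e$ does nothing until the first stage $t$ at which $W_{e,t}$ contains an element of $R_e$; at that stage it lets $z$ be the least such element, declares $\{z, \sigma(z,t)\}$ a class of size two, and never acts again. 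A separate coding action declares one fresh pair $\{z_j, \sigma(z_j, 2^j)\}$ a class of size two at each stage $2^j$, keeping $C(2)$ infinite; every element not placed in a pair is a singleton.

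If $W_e$ ever meets $R_e$, then $N_e$ acts and puts an element of $W_e$ into $C(2)$, so $W_e \not\subseteq C(1)$. If $W_e \cap R_e = \emptyset$, then $R_e \subseteq \omega \setminus W_e$, so $\omega \setminus W_e$ has upper density one, whence $\liminf_n \rho_n(W_e) = 0$ and $\rho(W_e) \neq 1$. In either case $W_e$ is not a density-one computably enumerable subset of $C(1)$, so $C(1)$ is not generically computable. For the density of $C(2)$: each $N_e$ contributes at most one pair, one coordinate of which lies in a partner slot below $n$ --- of which there are only $O(\log n)$ --- and the other coordinate in one of the (very few) blocks that begin below $n$; together with the coding pairs this gives $\rho_n(C(2)) \to 0$, so $\sC$ has generic character $\{1\}$, and $\sC$ is a genuine $(1,2)$-structure, $C(2)$ being infinite and (having density zero) co-infinite.

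The one point needing care is that $E_\sC$ must be genuinely \emph{computable}, not merely computably enumerable, and this is exactly the reason for encoding the stage of the action into the partner slot. To decide whether $E_\sC(x,y)$ holds: if $x = y$, yes; if neither $x$ nor $y$ is a partner slot, no; otherwise say $y = \sigma(z,t)$, and then $E_\sC(x,y)$ holds iff $x = z$ and, letting $e$ be the owner of the block containing $z$, the stage $t$ really is the first stage at which $W_{e,t}$ meets $R_e$ and $z$ is the least element of $W_{e,t} \cap R_e$ (or $\{x, y\}$ is one of the designated coding pairs) --- and all of this is a bounded computation in $t$, even though the statement ``$W_e$ eventually meets $R_e$'', and hence ``$z \in C(2)$'', is undecidable. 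This produces the required $\sC$, and hence the theorem. The same construction with ``size $k$'' and ``size $k+1$'' in place of ``singleton'' and ``size two'' yields the analogous statement for generic character $\{k\}$; the main obstacle, as just indicated, is carrying out the diagonalization while keeping the structure honestly computable.
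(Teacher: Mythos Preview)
Your argument is correct and takes a genuinely different route from the paper's. The paper's proof is much shorter because it cites two external results: Proposition~2.15 of \cite{JS12} supplies a simple c.e.\ set $B$ of asymptotic density zero, and Theorem~4.1 of \cite{CHR11} supplies a computable $(1,2)$-structure $\sA$ with $\sA(2)=B$; taking $\sC$ standard with $\sC(2)$ computable of density zero, a generically computable isomorphism $f\colon\sC\to\sA$ with witness $\phi$ would carry the density-one c.e.\ set $\sC(1)\cap\mathrm{Dom}(\phi)$ to an infinite c.e.\ subset of $\omega\setminus B$, contradicting simplicity. You instead build the bad structure from scratch, and your stage-encoding device---pairing $z$ with $\sigma(z,t)$ so that the action can be verified by a computation bounded in $t$---is essentially a rediscovery of the mechanism behind the cited \cite{CHR11} theorem. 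The paper's route is more modular; yours is self-contained and isolates the exact property needed (no c.e.\ subset of $C(1)$ of density one, which is weaker than immunity).

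Two small repairs are needed. First, the coding action is superfluous and, as written, introduces a bug: if a coding action at stage $2^j$ places some $z_j\in R_e$ into a pair and $N_e$ subsequently acts choosing that same $z_j$ (nothing in your description prevents this), you get a class of size three. Simply drop the coding: since $W_e=\omega$ for infinitely many $e$, infinitely many $N_e$ act, and $C(2)$ is automatically infinite. Second, the density-zero estimate for $C(2)$ should be made precise by fixing the block growth (say $|B_m|\ge 2^m$), so that only $O(\log n)$ blocks meet $[0,n)$; each contributes at most one $z$-coordinate, the partner slots contribute another $O(\log n)$, and $\rho_n(C(2))\to 0$ follows.
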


\begin{proof} First we appeal to Proposition 2.15 of \cite{JS12} to get a simple computably enumerable set $B$ of asymptotic density zero. (Recall that $B$ is simple if and only if there is no infinite computably enumerable subset of $\omega \setminus B$.) Then, by Theorem 4.1 of \cite{CHR11}, there is a computable equivalence structure $\sA$ consisting of infinitely many classes of size two which make up the set $B$, together with infinitely many classes of size one.

  We compare this with some standard computable structure $\sC$ isomorphic to $\sA$ in which the classes of size two make up a computable set $D$ of asymptotic density zero, for example, the classes of size two could be of the form $\{n^2,n^2+1\}$ for $n\geq 1$. Now suppose, by way of contradiction, that there were a generically computable map $f: \sC \to \sA$ and a corresponding partial computable function $\phi$ such that the domain of $\phi$ has density one and $f(x) = \phi(x)$ whenever $\phi(x)$ is defined.  

Then the set $(\omega \setminus D) \cap Dom(\phi)$ must have asymptotic density one as the intersection of sets of density one, and it is also computably enumerable, since $D$ is a computable set. 
But then its image under $\phi$ is an infinite computably enumerable subset of $\omega \setminus B$, 
violating the assumption that $B$ is a simple set. 
\end{proof}

We observe that this result will also hold for $(1,k)$ structures, that is, equivalence structures consisting of infinitely many classes of size 1 and infinitely many classes of size $k>1$ for some finite $k$, since Theorem 4.1 of \cite{CHR11} also holds for $(1,k)$ structures. 

The notion of coarsely computable isomorphism is a weaker notion, as seen by the following.

\begin{thm} Let $\sA$ and $\sB$ be isomorphic equivalence structures with generic character $\{1\}$ (that is, the set of elements of $\sA$, and of $\sB$, of type one, both have asymptotic density one). Then $\sA$ and $\sB$ are coarsely computably isomorphic.
\end{thm}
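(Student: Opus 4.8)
The plan is to construct the isomorphism $f\colon\sA\to\sB$ directly so that it is the identity on a set of density one, and to take the computable function witnessing coarse computability to be the identity function $\theta=\mathrm{id}_\omega$. With this choice the agreement set $\{x:\theta(x)=f(x)\}$ is exactly the fixed-point set of $f$, and if that set has density one then its image under $f$ (which equals itself) automatically has density one as well. So it suffices to produce a structural isomorphism of $\sA$ and $\sB$ whose set of fixed points has asymptotic density one; in particular, no computability hypothesis on $\sA$ or $\sB$ will be needed.

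Write the equivalence relations of $\sA$ and $\sB$ as $E$ and $E'$. First I would put $D=\sA(1)\cap\sB(1)$, the set of points lying in a singleton class of both structures; since $\sA(1)$ and $\sB(1)$ each have density one, the complement of $D$ is a union of two density-zero sets, so $D$ has density one. Since $D\subseteq\sA(1)\cap\sB(1)$, every non-singleton class of $\sA$, and of $\sB$, is disjoint from $D$. One might hope to make $f$ fix all of $D$, but this need not be possible: after deleting $D$, the two induced substructures on $\omega\setminus D$ carry the same non-singleton classes --- because, as $\sA\cong\sB$, for every $k\geq 2$ and for $k=\infty$ the two structures have the same number of classes of size $k$ --- but they contain $|\sA(1)\setminus\sB(1)|$ and $|\sB(1)\setminus\sA(1)|$ leftover singletons respectively, and these cardinalities may differ. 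The remedy is to discard a few more singletons: let $d_0<d_1<\cdots$ enumerate $D$, set $I=\{d_{2^k}:k\in\omega\}$ (an infinite set of density zero, since $d_{2^k}\geq 2^k$), and let $C=D\setminus I$, which still has density one. Now $I\subseteq\omega\setminus C$, so the induced substructures $\sA_0$ and $\sB_0$ of $\sA$ and $\sB$ on $\omega\setminus C$ each have infinitely many singleton classes; combined with their matching non-singleton parts, $\sA_0$ and $\sB_0$ have the same character and hence are isomorphic.

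Fix an isomorphism $g\colon\sA_0\to\sB_0$ and define $f\colon\omega\to\omega$ to be the identity on $C$ and $g$ on $\omega\setminus C$. Then $f$ is a bijection, and it is a structural isomorphism $\sA\to\sB$: the cases with both arguments in $C$, or both in $\omega\setminus C$, are immediate, and for $x\in C$, $y\notin C$ we have $E(x,y)$ false since $[x]_\sA=\{x\}$ and $x\neq y$, while $E'(f(x),f(y))=E'(x,g(y))$ is false since $[x]_\sB=\{x\}$ and $g(y)\in\omega\setminus C$, so $g(y)\neq x$. Finally, with $\theta=\mathrm{id}_\omega$ (a total computable function), the set $\{x:\theta(x)=f(x)\}$ contains $C$ and so has density one, and its image under $f$ equals itself and hence also has density one; thus $f$ is a coarsely computable isomorphism of $\sA$ and $\sB$.

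I do not anticipate a genuine obstacle: the one idea that must be found is that fixing all of $D$ is too ambitious, and that discarding an additional sparse infinite set of singletons equalizes the characters of the two remainders. Everything after that is routine bookkeeping with characters of equivalence structures, together with the elementary density facts (the complement of $D$ has density zero, $I$ has density zero, any set containing a density-one set has density one), and the computability content is trivial since $\theta$ may be taken to be the identity.
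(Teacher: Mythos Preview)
Your proof is correct and follows essentially the same approach as the paper: take $\theta=\mathrm{id}_\omega$, fix the identity on the density-one set $\sA(1)\cap\sB(1)$, observe that the leftover singleton counts may mismatch, and repair this by discarding a further density-zero set of common singletons before extending by an arbitrary isomorphism on the complement. The only difference is cosmetic: the paper removes just enough singletons to equalize the two leftover cardinalities (after identifying which side is smaller), whereas you uniformly discard an infinite sparse set $I$ so that both remainders automatically carry infinitely many singletons, which sidesteps the case analysis; both devices achieve the same end.
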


\begin{proof} For any element $x$, let $[x]_A$ be the equivalence class of $x$ in $\sA$ and $[x]_B$ be the equivalence class of $x$ in $\sB$.
Let $U_A = \{x: |[x]_A| = 1\}$, let  $U_B = \{x: |[x]_B| = 1\}$, and let $U = U_A \cap U_B$. By assumption, $U_A$ and $U_B$ have asymptotic density one, and thus $U$ also has asymptotic density one.  Now the identity function $\Phi(x) = x$ is a total computable function and acts as an isomorphism of $U$ to itself.  We want to arbitrarily extend $\phi$ to an isomorphism $f: \sA \to \sB$.  
The only difficulty might be that $U_A \setminus U$ and $U_B \setminus U$
have different cardinalities, say, without loss of generality, that $U_B \setminus U$ is smaller.  Then we can remove from $U$ a subset of $U_B$ of density zero to produce a set $V \subset U$ of density one such that $U_A \setminus V$ and $U_B \setminus V$ have the same cardinality. 
 This will make $\sA \setminus V$ isomorphic to $\sB \setminus V$ so that we may extend $\phi$ from $V$ to an isomorphism $f$ from $\sA$ to $\sB$ which agrees with $\phi$ on the set $V$ of density one. 
\end{proof}

It is not clear whether this result can be extended, even to structures with generic character $\{2\}$.

Without the additional conditions on the density of substructures, computable equivalence structures which are not 
computably isomorphic are, in general, not coarsely computably isomorphic either.

Recall from \cite{CCHM06} that a computable equivalence structure $\sA$ is computably categorical if and only if one of the following holds: 

\begin{enumerate}
\item $\sA$ has only finitely many finite equivalence classes, or
\item $\sA$ has finitely many infinite classes, there is a bound on the size of the finite equivalence classes, and there is at most one $k$ such that $\sA$ has infinitely many classes of size $k$. 
\end{enumerate} 

xxxx

\begin{conj} If $\sA$ is a computable equivalence structure which is not computably categorical, then there exist computable copies $\sB$ and $\sC$ of $\sA$ which are not coarsely computably isomorphic. 
\end{conj}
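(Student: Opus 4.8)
The plan is to reduce the problem to making an isomorphism invariant ``non-coarsely-computable in one copy but computable in the other,'' and then to carry this out by cases according to the characterization of computable categoricity recalled above from \cite{CCHM06}.

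\textbf{The reduction.} Suppose $f\colon\sB\to\sC$ were a coarsely computable isomorphism, witnessed by a total computable $\theta$ with $C=\{x:\theta(x)=f(x)\}$ of density one. For any function $\tau$ on an equivalence structure that is preserved by isomorphism --- for instance $\tau(x)=|[x]|$, or $\tau=\chi_{\sA(k)}$, or $\tau=\chi_{\{x:|[x]|=\infty\}}$ --- and for every $x\in C$ we have $\tau_{\sB}(x)=\tau_{\sC}(f(x))=\tau_{\sC}(\theta(x))$. Hence if $\tau_{\sC}$ agrees with a total computable function $h$ on all of $\sC$, then $\tau_{\sB}$ agrees with the total computable $h\circ\theta$ on the density-one set $C$, i.e.\ $\tau_{\sB}$ is coarsely computable. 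So it suffices, in each case, to exhibit computable copies $\sB,\sC$ of $\sA$ together with such a $\tau$ for which $\tau_{\sC}$ is (totally) computable while $\tau_{\sB}$ is not coarsely computable.

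\textbf{Easy cases.} Since $\sA$ is not computably categorical it has infinitely many finite classes, and at least one of: (R1) some finite $k$ with infinitely many classes of size $k$; (R2) unbounded finite class sizes; (R3) infinitely many infinite classes. In case (R1) take $\tau=\chi_{\sA(k)}$; in case (R3) take $\tau=\chi_{\{x:|[x]|=\infty\}}$. In either case build $\sC$ by laying out the relevant classes (the size-$k$ classes, resp.\ the infinite classes) as a fixed computable set and placing the rest of $\sA$'s character computably, using Lemma~\ref{lem4} and the realizability results of \S1 (the finitely many infinite classes of $\sA$, if any, being put on a separate computable part); then $\tau_{\sC}$ is computable. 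Build $\sB$ by a finite-injury diagonalization: for the $e$-th computable set $R_e$, reserve an interval $I_e$ whose length is a fixed fraction of its right endpoint and fill it so that $\tau_{\sB}$ restricted to $I_e$ disagrees with $R_e$ on all but $O(1)$ points --- possible because within $I_e$ we may freely decide which elements lie in size-$k$ (resp.\ infinite) classes and which lie in classes of the other sizes that $\sA$ supplies. Then $\tau_{\sB}$ is not coarsely computable. The only technical points are keeping $\sB$ computable when $R_e$ is slow or partial (time out and retry on a fresh interval) and realizing $\chi(\sA)$ exactly in both copies; the unbounded supply of the relevant ``type'' provided by (R1)/(R3) makes this bookkeeping routine.

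\textbf{The hard case: (R2) alone.} Suppose $\sA$ has unbounded finite class sizes, every finite size occurs only finitely often, and only finitely many infinite classes. Take $\tau=\sigma$, the class-size function. By Lemma~\ref{lem5}, $\sA$ has a computable $s_1$-function $f$, giving $m_0<m_1<\cdots$ with $m_i=\lim_s f(i,s)$ and an $\sA$-class of size $m_i$ for each $i$. If the set of finite sizes of $\sA$ is computable one may take $\sC$ ``sorted,'' making $\sigma_{\sC}$ computable, and proceed as above. In general one builds $\sB$ and $\sC$ simultaneously by a priority construction meeting, for each $e$, the requirement $N_e$: the set $\{x:\sigma_{\sB}(x)\ne\sigma_{\sC}(\theta_e(x))\}$ has positive upper density, where $\theta_e$ is the $e$-th partial computable function. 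By the reduction this suffices, since a coarsely computable isomorphism would force the complementary set to contain a density-one set. To meet $N_e$: wait for $\theta_e$ to converge on a growing interval $I_e$ (timing out and retrying on a fresh interval); pick a fresh index $i$ with $m_i$ larger than the current stage; declare the size-$m_i$ class of $\sB$ to occupy $I_e$, so $\sigma_{\sB}\equiv m_i$ on $I_e$; and commit the $\sC$-classes meeting $\theta_e[I_e]$ to sizes $m_j$ with $j\ne i$ --- legitimate because $m_i$ is used nowhere else in $\sC$ --- filling the resulting gaps in $\sC$ with further classes dictated by $\chi(\sA)$. Then $\sigma_{\sB}(x)=m_i\ne m_j=\sigma_{\sC}(\theta_e(x))$ for every $x\in I_e$, and $|I_e|=m_i$ is a fixed fraction of the then-current length, so the upper density is positive. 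Finite injury resolves conflicting $\sC$-commitments between requirements; since an element is only ever assigned to a class and never reassigned, $\sB$ and $\sC$ stay computable, and bookkeeping of the already-used sizes keeps $\chi(\sB)=\chi(\sC)=\chi(\sA)$.

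\textbf{Main obstacle.} The crux is the coherence of the priority construction in the (R2) case: simultaneously keeping $\sB$ and $\sC$ total and computable, realizing $\chi(\sA)$ exactly in each (which couples the two constructions through the shared pool of not-yet-used sizes --- and is why the case split matters, since only (R1)/(R2)/(R3) guarantees an unbounded supply of fresh sizes or types to diagonalize with), and meeting each $N_e$ with a witnessing interval whose relative length stays bounded below while higher-priority requirements keep displacing it outward. Getting this bookkeeping exactly right --- in particular when the finite sizes of $\sA$ do not form a computable set, so that no ``nice'' comparison copy $\sC$ is available and the two structures must genuinely be built against each other --- is the part I expect to be delicate, and is presumably why the statement is posed here as a conjecture rather than a theorem.
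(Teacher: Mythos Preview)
First, note that the paper states this as a \emph{conjecture} and offers no proof; there is nothing in the paper to compare your argument against. Your proposal is therefore an attempted proof of an open statement, and you are right to flag in your last paragraph that the authors likely left it open for a reason.

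There is one genuine structural gap in your reduction. Your argument shows: if there is a coarsely computable isomorphism $f\colon\sB\to\sC$ witnessed by a total computable $\theta$, then $\tau_{\sB}$ agrees with the computable function $\tau_{\sC}\circ\theta$ on a set of density one, so $\tau_{\sB}$ is coarsely computable. This rules out a coarsely computable isomorphism \emph{from $\sB$ to $\sC$}. But the paper's definition is not obviously symmetric: a coarsely computable isomorphism $g\colon\sC\to\sB$ is witnessed by a total computable $\theta$ approximating $g$, and from $\tau_{\sC}$ computable you cannot read off a total computable approximation to $\tau_{\sB}$, because inverting $\theta$ is only partial computable (and $\theta$ need not be injective). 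Your hard-case requirements $N_e$ have the same asymmetry: they block witnesses for maps $\sB\to\sC$, not $\sC\to\sB$. The fix is to add the mirror requirements $N_e'$ ensuring that $\{x:\sigma_{\sC}(x)\ne\sigma_{\sB}(\theta_e(x))\}$ has positive upper density as well; this doubles the bookkeeping but does not change the shape of the construction.

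A smaller point about the case split: your (R1) as stated (``some finite $k$ with infinitely many classes of size $k$'') is not by itself what makes the diagonalization easy. What you actually use in the (R1) paragraph is an \emph{infinite supply of classes of some other type} to place opposite the size-$k$ classes on the diagonalizing intervals. Non-categoricity guarantees this supply, but it may come from (R2) rather than from a second finite $k'$; in that sub-case your ``easy'' argument already needs the careful size-bookkeeping you reserve for the hard case (each non-$k$ size may occur only once). So the clean ``easy'' case is really (R1$'$): two distinct finite sizes each with infinitely many classes, or (R1) together with (R3). When (R1) holds only via (R2), you are effectively back in the hard case.

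Your identification of the main obstacle is accurate: in the pure-(R2) situation you must build $\sB$ and $\sC$ simultaneously, realize a possibly non-computable character exactly in both using only the $s_1$-function, and keep the diagonalizing intervals large enough relative to their position while higher-priority requirements push them outward. That coherence problem is real, and your sketch does not yet resolve it; this is consistent with the authors' decision to leave the statement as a conjecture.
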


Next we look at structures where the densities are positive but not 1. We will again focus here on $(1,2)$-structures.  From the examples seen so far, we might suspect that suspect that different densities pose a barrier to asymptotically computable isomorphism in such structures.  We will see that, at least for weakly coarsely computable isomorphism, it does not.

For any equivalence structure $\sA$, and any $n \leq \omega$, let $\sA(n) = \{x: |[x]| = n\}$.  The following lemma will be useful.

\begin{lem} \label{lem3.2} For any $\Delta^0_2$ real $q \in [0,1]$, there is a computable $(1,2)$-equivalence structure $\sA$ such that the asymptotic density of the elements of type one equals $q$.
\end{lem}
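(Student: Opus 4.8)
The plan is to build the structure $\sA$ by an explicit stage-by-stage construction, using the $\Delta^0_2$ approximation of $q$ to decide, for each integer, whether it lands in a class of size one or size two. Fix a computable sequence of rationals $q_s \to q$ (available since $q$ is $\Delta^0_2$; more precisely, fix a computable approximation $\{q_s\}$ with $\lim_s q_s = q$). I will construct an increasing sequence of finite initial segments $\sigma_0 \subsetneq \sigma_1 \subsetneq \cdots$ of $\omega$, together with a computable equivalence relation on $\omega$ all of whose classes have size $1$ or $2$, so that at stage $s$ I ``commit'' a block of fresh integers to either singleton classes or to pairs in whatever proportion is needed to nudge the running density of $\sA(1)$ toward $q_s$. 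The point is that the target $q$ moves only in the limit, and each stage only touches finitely many new integers, so a standard moving-target bookkeeping argument applies.

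The key steps, in order: First, set up the approximation and the invariant. After stage $s$ I will have defined $E$ on an initial segment $[0,n_s)$, with $\sA(1)$-density on $[0,n_s)$ equal to some rational $p_s$, and I arrange $|p_s - q_s| \le 1/s$ (possible because I am free to choose $n_{s+1}$ large and then fill the interval $[n_s, n_{s+1})$ with the appropriate mix of singletons and pairs — pairs must be placed within the interval, but since I control $n_{s+1}$ I can always make the interval long enough and of the right parity). Second, verify that the resulting $E$ is a genuine equivalence relation with all classes of size $\le 2$: this is immediate from the construction since each class is created whole within a single stage and never modified. Third, verify computability of $E$: to decide $E(x,y)$, run the construction until $x,y < n_s$ for some $s$, then read off the answer; the construction is uniformly computable because $q_s$ is. Fourth, check the density claim: for $n$ with $n_s \le n < n_{s+1}$, the density of $\sA(1)$ up to $n$ differs from $p_s$ by at most $(n_{s+1}-n_s)/n_s$, which I can force to go to $0$ by choosing $n_{s+1} \le 2 n_s$ (bounded growth) while still having enough room — here one must be slightly careful to reconcile ``bounded growth of $n_s$'' with ``enough room to correct the density to within $1/s$,'' but since each correction needs only $O(n_s \cdot |q_{s+1}-q_s| + 1)$ fresh integers and we may also just wait (take several stages with $q_s$ essentially constant) this is arranged by a routine choice of a slowly growing $n_s$. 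Combining, $\rho_n(\sA(1)) \to q$.

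Finally I should confirm both endpoint cases $q = 0$ and $q = 1$ are covered: these just give the structure with finitely many (or no) singleton classes among cofinitely many pairs, or the reverse, and the construction degenerates gracefully. The main obstacle I anticipate is the interaction between the parity/placement constraint (a size-two class occupies two consecutive fresh integers, so the number of integers assigned to pairs in any committed block is even) and the requirement that the finite-stage density track a moving rational target closely enough that the limit is exactly $q$ rather than merely close to it; this is handled by making the stage-$s$ interval long compared to the ``rounding error'' $2/n_s$ forced by parity, i.e. by letting $n_s$ grow, but slowly enough (e.g. $n_{s+1} \le 2n_s$) that the density cannot oscillate between stages. Everything else is routine finite bookkeeping.
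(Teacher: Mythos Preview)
Your approach is essentially the same as the paper's: fix a computable rational approximation $q_s \to q$, build the structure in blocks, and fill each block with the right mix of singletons and pairs so that the density of $\sA(1)$ at the checkpoints $n_s$ tracks $q_s$. The paper arranges the checkpoint density to equal $q_n$ \emph{exactly} by taking $s_{n+1}$ to be a suitable multiple of $s_n$ determined by the denominator of $q_{n+1}$, but the idea is the same.

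Two small issues in your write-up. First, the bound $n_{s+1} \le 2n_s$ only gives $(n_{s+1}-n_s)/n_s \le 1$, not $\to 0$, so it does not by itself control the between-checkpoint density. You evidently sense this (``a slowly growing $n_s$''), but note that $n_{s+1}/n_s \to 1$ is genuinely in tension with your invariant $|p_s - q_s| \le 1/s$ when the approximation $q_s$ oscillates: a large jump $|q_{s+1}-q_s|$ forces a proportionally large block if $p_{s+1}$ is to land near $q_{s+1}$. The clean fix is either to interleave singletons and pairs \emph{within} each block so the running density stays near the target throughout (then the block length is irrelevant), or to drop the exact-tracking invariant and instead move $p_s$ toward $q_s$ as far as possible subject to, say, $n_{s+1} \le n_s(1+1/s)$; since $\sum 1/s$ diverges and $q_s$ eventually settles near $q$, this forces $p_s \to q$. (The paper's own proof is in fact silent on the between-checkpoint density, so you are if anything being more careful.) Second, on the endpoints $q\in\{0,1\}$: a $(1,2)$-structure by definition has infinitely many classes of \emph{each} size, so ``finitely many (or no) singleton classes'' is not permitted --- at $q=0$ you still need infinitely many singletons, just placed sparsely enough to have density zero, and dually at $q=1$.
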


\begin{proof} Let $q = \lim_{n \to \infty} q_n$ where each $q_n$ is a dyadic rational and $q_n$ is not $0$ or $1$ for any $n$.  
We will define a computable increasing sequence $s_n$ and define the computable equivalence relation $\sA = (\omega,E)$ in stages $s$ on
 all numbers up to $s_n$ such that the relative number of elements of classes of size two is $q_n$.  For $n=0$, let $q_0 = i/j$ and let $s_0 = 2j$. 
Define $E$ up to $2j$ to have classes $\{0\}, \{1\},\dots,\{2i-1\}$ of size one and $\{2i,2i+1\}, \dots, \{2j-2,2i-1\}$ of size two. 
Given the definition of $E$ on $\{0,1,\dots,s_n-1\}$ such that there are $q_n s_n$ classes of size one, so that $(1-q_n) s_n$ is even and we may also assume that $s_n$ is even,  and given $s_{n+1} = i/j$, do the following. Let $s_{n+1} = j s_n$ and add $(i-q_n) s_n$ new classes of size one and $(j-i-1+q_n)s_n$ new classes of size two out of the numbers between $s_n$ and $j s_n$. Thus we end up with $i s_n$ out of $j s_n$ classes of size one, as desired. 
We  just observe that $i-q_n > 0$ since we assume that $i \geq 1$(since $q_{n+1} \neq 0$) and $q_n < 1$ and $j-i-1+q_n >0$ since  $j>i$ (because $q_{n+1} \neq 1$) and $q_n > 0$. 
\end{proof}

\begin{lem} \label{lem3.3} If two isomorphic computable equivalence structures $\sA$ and $\sB$ have bounded character, and for each $n \leq \omega$, $\sA(n)$ and $\sB(n)$ are computable, 
then $\sA$ and $\sB$ are computably isomorphic.  
\end{lem}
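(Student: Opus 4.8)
The plan is to build the computable isomorphism by cutting $\omega$ into finitely many computable pieces according to the size of the equivalence class a point belongs to, and defining $f$ separately on each. Write $\sA=(\omega,E)$ and $\sB=(\omega,F)$, and let $k_0$ bound the sizes of the finite classes (the same bound works for $\sB$ since $\sA\cong\sB$). Then $\omega$ is the disjoint union of the finitely many nonempty sets among $\sA(1),\dots,\sA(k_0),\sA(\omega)$, each computable by hypothesis; the analogous decomposition holds for $\sB$, and for each $n\in\{1,\dots,k_0,\omega\}$ the number $c_n\leq\omega$ of classes of size $n$ is the same in both structures. Since membership in each piece is decidable, it is enough to define $f$ computably on each $\sA(n)$ so as to carry it bijectively and isomorphically onto $\sB(n)$: the finitely many pieces then glue to a total computable bijection $f\colon\omega\to\omega$, and it is an isomorphism because two points in different pieces lie in classes of different sizes, hence are inequivalent in $\sA$ and have images that are inequivalent in $\sB$.

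First I would fix a finite $n\leq k_0$ with $\sA(n)\neq\emptyset$. Given $x\in\sA(n)$, the class $[x]_E$ has size exactly $n$, so it can be computed by searching for the $n-1$ further points $E$-related to $x$, a search certain to terminate; this is where computability of $\sA(n)$ is essential, since one cannot in general decide the size of a bounded finite class without first knowing which piece $x$ lies in. If $c_n$ is finite I use its known value to list representatives of all $c_n$ classes of $\sA(n)$ and of $\sB(n)$, and map the $j$-th class of $\sA(n)$ onto the $j$-th class of $\sB(n)$ by the increasing bijection between these two $n$-element sets. If $c_n=\omega$ I instead enumerate the classes of $\sA(n)$ effectively --- repeatedly pick the least element of $\sA(n)$ not yet assigned to a class and compute its class as above --- do the same for $\sB(n)$, and again match $j$-th class to $j$-th class by the increasing $n$-element bijection. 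Either way $f\restriction\sA(n)$ is computable and carries $\sA(n)$ isomorphically onto $\sB(n)$.

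The piece $\sA(\omega)$, if nonempty, is handled the same way when $c_\omega$ is finite: each infinite class is the computable set $\{y:y\,E\,r\}$ for a representative $r$ obtained by search, so I pair the classes off and use an increasing computable bijection between the corresponding pair of computable infinite sets. When $c_\omega=\omega$ I build $f\restriction\sA(\omega)$ by a back-and-forth construction of a computable bijection $\sA(\omega)\to\sB(\omega)$: enumerating $\sA(\omega)$ and $\sB(\omega)$ in increasing order and extending the finite partial isomorphism built so far, the forth step sends the new point $x$ either into the $F$-class already designated for $[x]_E$ (possible since that class is infinite and an unused member can be located effectively) or, if $[x]_E$ has not been met before, into a fresh $F$-class (one exists since only finitely many are in use, and freshness is decidable because $F$ is computable); the back step is symmetric. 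This yields a computable isomorphism of $\sA(\omega)$ onto $\sB(\omega)$, and gluing all the pieces completes the construction.

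I expect the only genuinely delicate point to be the case of infinitely many infinite classes, handled by the back-and-forth just described; the remaining cases follow the pattern of the computable-categoricity argument of \cite{CCHM06}, and verifying that the glued map is an isomorphism is routine from the piecewise construction.
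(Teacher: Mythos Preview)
Your proposal is correct and follows exactly the approach the paper takes: partition each structure into the finitely many computable pieces $\sA(n)$, $\sB(n)$ and build a computable isomorphism piecewise. The paper's own proof is a one-sentence sketch (``partition each structure into classes of a particular size $n$, and then observe that $\sA(n)$ is computably isomorphic to $\sB(n)$''); you have simply filled in the details that justify that observation, including the back-and-forth argument for the case of infinitely many infinite classes.
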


\begin{proof} We simply partition each structure into classes of a particular size $n$, and then observe that $\sA(n)$ is computably isomorphic to $\sB(n)$ for each $n$.
\end{proof}

\begin{thm} \label{thm3.4} Suppose that $\sA = (\omega,R)$ is a computable $(1,2)$-structure such that the asymptotic density of the elements of type one is a real $q$, so that the asymptotic density of the elements of type two is $1-q$, with $0 < q < 1$. Then $\sA$ is weakly coarsely computably isomorphic to some computable structure $\sC$ in which the set of elements of size $2$ is computable and has density $q$. 
\end{thm}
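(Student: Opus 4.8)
The plan is to fix a convenient target structure and then to build the isomorphism data by an adaptive, self-correcting construction. Since $\sA$ is computable, $\sA(1)$ is a $\Pi^0_1$ set, so $q$ is a $\Delta^0_2$ real; hence by Lemma \ref{lem3.2}, whose construction is explicit (so that the sets of size-$1$ and size-$2$ elements of the structure it builds are both computable), I may fix a computable $(1,2)$-structure $\sC=(\omega,S)$ in which $\sC(2)$ is a computable set of density $1-q$ and $\sC(1)$ is computable of density $q$. Fix computable listings $\{c_0,d_0\},\{c_1,d_1\},\dots$ of the classes making up $\sC(2)$ and $\{e_0<e_1<\dots\}$ of $\sC(1)$.

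The heart of the matter is to produce a total computable $\theta$ and a density-one set $C$ with $\theta\restriction C$ an isomorphism of $\sA\restriction C$ onto a substructure of $\sC$ and $\theta[C]$ dense in $\sC$. Here $\theta$ must be total and computable although $\sA(2)$ is only computably enumerable, so $\theta$ must commit on $x$ before the size of the class of $x$ can be known. I would run the effective enumeration of the size-$2$ classes of $\sA$ (the class $\{a,b\}$ surfacing at stage $\max(a,b)$) and commit $\theta(x)$ only at the first stage at which \emph{either} the class of $x$ has surfaced \emph{or} at least $x^{2}$ classes have surfaced --- the crucial point being that the delay is tied to the number of classes already seen, not to the stage count (so $\theta$ is still total, since $\sA$, being a $(1,2)$-structure, has infinitely many size-$2$ classes). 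At that stage: if the earlier processing of a smaller partner of $x$ has left a reservation ``$\theta(x):=d_n$'', obey it and put $x\in C$; else if a partner $x'>x$ of $x$ is already visible, take the next unused $\sC$-pair $\{c_n,d_n\}$, set $\theta(x):=c_n$, put $x\in C$, and reserve $d_n$ for $x'$; else if a partner $x'<x$ of $x$ is visible but no reservation was left (so $x'$ had already been committed as a size-$1$ element), put $x\notin C$ and let $\theta(x)$ be a fixed dummy value; else guess that $x$ has size $1$, send it to the next unused $e_i$, and put $x\in C$. I would also permanently keep a fixed sparse computable set of ``sacrifices'' (say $\{2^{2^{j}}:j\in\omega\}$) out of $C$ and withhold the dummy value and a density-zero infinite set of the $e_i$ from the last clause --- trivial bookkeeping whose only purpose is to make the completion of $\theta\restriction C$ to a bijection possible.

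Writing $Q_0,Q_1,\dots$ for the size-$2$ classes of $\sA$ in discovery order and $a_k=\min Q_k$, one sees that $Q_k$ is matched to a $\sC$-pair (with both its elements in $C$) exactly when $\theta(a_k)$ is committed only after $Q_k$ has surfaced, i.e.\ exactly when $a_k^{2}>k$; when $a_k^{2}\le k$ only $\max Q_k$ is dropped from $C$, while $a_k$ remains in $C$ as an isolated point sent into $\sC(1)$. Since $a_k^{2}\le k$ forces $a_k\le\sqrt k$ and the $a_k$ are pairwise distinct, at most $O(\sqrt N)$ elements below $N$ are ever dropped (the sacrifices being even sparser), so $C$ has density one; and distinct naturals cannot satisfy $a_k\le\sqrt k$ cofinitely, so infinitely many classes are matched, whence --- using the $\sC$-pairs in order, and noting that a matched class can be spoiled later only through a sacrifice --- $\theta[C]$ contains all but density-zero much of $\sC(2)$, and likewise of $\sC(1)$ since the genuine size-$1$ elements of $\sA$ have positive density; thus $\theta[C]$ has density one. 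It is then routine that $\theta$ is injective on $C$ and that $R(x,y)\iff S(\theta(x),\theta(y))$ for $x,y\in C$ (a two-element class inside $C$ was matched through the second clause, and a $\sC$-pair is realized only by the two partners assigned to it), so $\theta\restriction C$ is an isomorphism of $\sA\restriction C$ onto a substructure of $\sC$; finally $\omega\setminus C$ (containing the sacrifices) and $\omega\setminus\theta[C]$ (containing the withheld $e_i$) are both infinite, so $\theta\restriction C$ extends to a bijection $f:\omega\to\omega$, and $f$, $C$, $\theta$ witness that $\sA$ is weakly coarsely computably isomorphic to $\sC$.

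The step I expect to be the real obstacle is precisely the design of this commitment schedule together with its density bookkeeping: getting the timing right so that $\theta$ stays total and computable, yet only a very sparse set of elements is handled wrongly and still infinitely many classes get matched (which is what forces $\theta[C]$ to be dense). A smaller point that also needs care is the assertion, used at the outset, that the density $q$ of the co-c.e.\ set $\sA(1)$ is $\Delta^0_2$.
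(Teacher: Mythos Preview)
Your overall strategy—committing $\theta(x)$ after a delay tied to the number of size-$2$ classes already seen—is natural, and your argument that $C$ has density one is correct. But the claim that $\theta[C]$ has density one fails, and the culprit is exactly the sacrifice mechanism you introduced to make the extension to a bijection possible.

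Here is an explicit counterexample. Let $\sA$ have size-$2$ classes $\{3k,\,2^{2^{k+10}}\}$ for $k\ge 0$, with every other element a singleton. This is a computable $(1,2)$-structure with $\sA(1)$ of density $2/3$. In discovery order $Q_k=\{3k,\,2^{2^{k+10}}\}$ with $a_k=3k$, and by your own criterion every $Q_k$ with $k\ge 1$ is good (since $(3k)^2>k$), so your scheme assigns $\theta(3k)=c_{k-1}$ and $\theta(2^{2^{k+10}})=d_{k-1}$. But $2^{2^{k+10}}$ lies in your sacrifice set for every $k$, hence is excluded from $C$; therefore \emph{no} $d_m$ belongs to $\theta[C]$. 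Since $\{d_m:m\ge 0\}$ has density $(1-q)/2>0$, condition~(iii) fails. The same attack defeats any fixed computable sacrifice set: an adversarial $\sA$ can place $\max Q_k$ at a sacrifice point for every $k$, and then the $\theta$-images of the sacrificed elements form a positive-density subset of $\sC(2)$. So the phrase ``a matched class can be spoiled later only through a sacrifice'' is true, but the spoiled $\sC$-pairs need not be sparse in $\omega$.

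The paper's route sidesteps this by not mapping into a fixed target at all. Instead it modifies $\sA$ in place to a computable $(1,2)$-structure $\sB$ on the \emph{same} universe with $R_B\subseteq R$: one chooses thresholds $n_i$ and look-ahead stages $s_i$ with $|A^{s_i}(1)\cap n_i|/n_i<q+2^{-i}$ (such pairs exist precisely because $\sA(1)$ has density $q$), and commits $x$ to $B(1)$ when its partner has not appeared by the relevant $s_i$. Then $B(1)$ and $B(2)$ are computable and $B(1)\setminus A(1)$ has density zero. Because the identity serves as both $f$ and $\theta$, the sets $C$ and $\theta[C]$ coincide, so the image-density requirement is automatic and no sacrifice device is needed; a computable isomorphism from $\sB$ to the standard $\sC$ (Lemma~\ref{lem3.3}) finishes the argument. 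The density hypothesis on $\sA(1)$—which your construction never really used—is what drives the choice of $(n_i,s_i)$ here.
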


\begin{proof} We first build a computable equivalence structure $\sB$ isomorphic to and weakly coarsely computably isomorphic to $\sA$ where the density of $\mathcal{B}(1)$ is the same as that of $\sA(1)$, and where $\mathcal{B}(1)$ and $\mathcal{B}(2)$ are computable.  We have also a standard computable structure $\sC \cong \sA$ with $\sC(1)$ and $\sC(2)$ computable, and where $\sC(2)$ has density $q$.  By Lemma \ref{lem3.3}, we have $\sB$ computably isomorphic to $\sC$, so that $\sA$ is weakly coarsely computably isomorphic to $\sC$.

  To contstruct $\sB$, we let $A^s(2) = \left\{x \leq s: \exists y \leq s \left[ (y \neq x) \wedge (xRy)\right] \right\}$ and let $A^s(1) = s \setminus A^s(2)$.  Then for each $s$, $A^s(2) \subsetneq A(2)$ whereas $A(1) \cap s \subseteq A^s(1)$.  The idea of the proof is that classes  of size two are observable and that the sets $A^s(1)$ approximate $A(1)$.  Thus we will define $\sB = (\omega,R_B)$ so that $R_B$ is a subset of $R$ and differs from $R$ on a set of asymptotic density zero, so that we can use the identity as our set isomorphism.

  We define computable increasing sequences $(n_i)_{i<\omega}$ and $(s_i)_{i<\omega}$ with $2^i \leq n_i \leq s_i$ and define the relation $R_B$ for all pairs $(x,y)$ for all $x,y < n_i$ at stage $s_i$, so that $R_B$ is computable.  We will let $q_i = |A(1) \cap n_i|/n_i$, so that $\lim\limits_i q_i = q$.  Let $n_0 = 1 = s_0$.  Given $n_i$ and $s_i$, and having defined $R_B$ on all elements less than $n_i$ as well as some other elements less than $s_i$, and having defined $B(1)$ up to $n_i$, let $(n_{i+1},s_{i+1})$ be the least pair such that $|A^s(1) \cap n|/n < q+2^{-i}$. Now extend the definition of $R_B$ and of $B(1)$ as follows.  For any $x,y$ with $n_i < x < y < s$, let $xR_Bx$ if and only if $xRy$.  For $x$ such that $n_i \leq x < n_{i+1}$, put $x \in B(1)$ if there is no $y$ with $x<y<s_{i+1}$ such that $xRy$. For $y$ with $s_i \leq y < s_{i+1}$, put $y \in B(1)$ if there is $x \in B(1)$ such that $xRy$.  This is necessary to ensure that $B(1)$ is computable, so that we cannot change our mind about $[x]_B$ being a singleton once we have decided that it is. This also means that $B(1)$ will contain pairs $x,y$ of elements where $xRy$ but $y$ is much larger than $x$.

  It is clear that $A(1) \subset B(1)$ and it remains to calculate the density of $B(1) \setminus A(1)$.  Let $e_i = |A^{s_i}(1) \cap n_i \setminus A(1)|/n_i$; these are the only elements which may be put into $B(1)$ since they will have a partner larger than $n_i$.  Since  $|A^{s_i}(1) \cap n_i|/n_i < q+2^{-i}$, it follows that $e_i < q-q_i + 2^{-i}$.  Since $A(1)$ has asymptotic density $q$ and $n_i \geq 2^i$, it follows that $\lim\limits_i q_i = q$, and hence the set of elements where $R_B$ differs from $R$ has asymptotic density zero.

  Thus, the identity is a set-isomorphism which is an isomorphism between $\sA$ and $\sB$ on a set of asympotic density one, as desired. Note that, since $0 < q < 1$, and $B(1) \setminus A(1)$ has density zero, the set $B(1)$ will still have asymptotic density $q$. 
\end{proof}

We observe that this result will also hold for $(1,k)$ structures.  that is, equivalence structures consisting of infinitely many classes of size 1 and infinitely many classes of size $k>1$ for some finite $k$. 

\begin{lem} \label{lem3.4} Suppose that $A = \{a_0 < a_1 < \dotsb\}$ has positive asymptotic density $\alpha$ and that $\lim\limits_n |C \cap a_n|/n = 0$. Then $C$ has asymptotic density zero.
\end{lem}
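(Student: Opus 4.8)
The plan is to turn the positivity of $\alpha$ into linear growth of the counting function of $A$. Writing $A=\{a_0<a_1<\dotsb\}$, the elements of $A$ lying below $a_n$ are exactly $a_0,\dots,a_{n-1}$, so $|A\cap a_n|=n$. Hence the hypothesis that $A$ has asymptotic density $\alpha$ says precisely that $\lim_n \frac{n}{a_n}=\alpha$, equivalently $\frac{a_n}{n}\to\frac1\alpha<\infty$; in particular, for large $n$ the quantity $\frac{n+1}{a_n}=\frac{n+1}{n}\cdot\frac{n}{a_n}$ is bounded, converging to $\alpha$. This is the only use of $\alpha>0$, and it is exactly what keeps $A$ from being too sparse.

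Next I would bound $\frac{|C\cap m|}{m}$ for large $m$ by sandwiching $m$ between consecutive elements of $A$. Since $A$ has positive density it is infinite, so for all sufficiently large $m$ there is a unique $n=n(m)$ with $a_n\le m<a_{n+1}$, and $n(m)\to\infty$ as $m\to\infty$. Then $C\cap m\subseteq C\cap a_{n+1}$ while $m\ge a_n$, so
\[
\frac{|C\cap m|}{m}\;\le\;\frac{|C\cap a_{n+1}|}{a_n}\;=\;\frac{|C\cap a_{n+1}|}{n+1}\cdot\frac{n+1}{a_n}.
\]
Letting $m\to\infty$ forces $n\to\infty$: the first factor tends to $0$ by hypothesis, and the second factor stays bounded by the previous paragraph, so the product tends to $0$. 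Therefore $\lim_m\frac{|C\cap m|}{m}=0$, i.e.\ $C$ has asymptotic density zero, as claimed.

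I do not expect a genuine obstacle here; the only care needed is the bookkeeping in the change of index from $n$ to $m$ — checking that $n(m)$ is well defined for large $m$, that $n(m)\to\infty$, and that the displayed inequality holds (which uses only monotonicity of $m\mapsto|C\cap m|$ together with $a_n\le m<a_{n+1}$). Everything else is immediate from the identity $|A\cap a_n|=n$.
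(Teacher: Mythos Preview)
Your proof is correct and follows essentially the same sandwiching argument as the paper: both locate $m$ between consecutive $a_n,a_{n+1}$ and bound $|C\cap m|/m$ by $|C\cap a_{n+1}|/a_n$. The only cosmetic difference is the factorization of that bound---the paper writes it as $\frac{|C\cap a_{n+1}|}{a_{n+1}}\cdot\frac{a_{n+1}}{a_n}$ and invokes $a_{n+1}/a_n\to 1$, whereas you write $\frac{|C\cap a_{n+1}|}{n+1}\cdot\frac{n+1}{a_n}$, which applies the hypothesis directly and is arguably slightly cleaner.
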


\begin{proof} Since $A$ has positive density $\alpha$ and for  $A \cap a_n = \{a_0,\dots,a_{n-1}\}$, it follows that $|A \cap a_n|/a_n = n/a_n$ and thus $lim_n n/a_n = \alpha$.  Then 
\[
lim_n a_n/a_{n+1} = lim_n \frac {n+1}{a_{n+1}} / lim_n n{a_n} = \alpha/\alpha = 1.
\]

For any $i > a_0$, we have $a_n < i \leq a_{n+1}$ for some $n$.  Then $|C \cap a_n| \leq|C \cap i| \leq |C \cap a_{n+1}|$, so 

\[|(C \cap i|/i \leq |C \cap a_{n+1}|/a_n = |C \cap a_{n+1}|/a_{n+1} \cdot a_{n+1}/a_n,
\]
so that $lim_i |C \cap i|/i = 0$, as desired.
\end{proof}

\begin{lem} \label{lem3.5} Let $A$ and $B$ be subsets of $\omega$ having positive asymptotic densities $\alpha$ and $\beta$.  Suppose that $C \subset A$ and $D \subset C$ are computably enumerable sets, both of asymptotic density zero.  Then there is a computable isomorpihsm $f: \sA \to \sB$ such that $f[C]$ and $f^{-1}[D]$ each have asymptotic density zero. 
\end{lem}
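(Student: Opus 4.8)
The plan is to show that the ``obvious'' map already works: take $f$ to be the order isomorphism between $A$ and $B$, and derive the two density conditions from the sparseness of $C$ and $D$ via Lemma \ref{lem3.4}. First I would fix the increasing computable enumerations $A = \{a_0 < a_1 < \dotsb\}$ and $B = \{b_0 < b_1 < \dotsb\}$ (these are available since, for a computable isomorphism to exist at all, $A$ and $B$ must be computable) and set $f(a_n) = b_n$, a computable bijection of $A$ onto $B$. Because $A$ has positive density, $|A \cap a_n| = n$ gives $n/a_n \to \alpha$, i.e.\ $a_n \sim n/\alpha$, and likewise $b_n \sim n/\beta$. If in the intended application $A$ and $B$ additionally carry equivalence relations that $f$ must respect, one runs instead the standard effective back-and-forth between the two isomorphic computable structures; this replaces $f$ by another computable bijection that differs from the order isomorphism only in a way that does not affect the estimates below, so I would present the clean order-isomorphism version.

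Next I would estimate $\rho(f[C])$. Since $C \subseteq A$ has density $0$ we have $|C \cap a_n|/a_n \to 0$, hence $|C \cap a_n|/n = (|C \cap a_n|/a_n)(a_n/n) \to 0$. As $f$ is order-preserving, $f[C] \cap \{0,\dots,b_n-1\} = \{b_m : m < n \text{ and } a_m \in C\}$, so $|f[C] \cap b_n| = |C \cap a_n|$ and therefore $\lim_n |f[C] \cap b_n|/n = 0$. Lemma \ref{lem3.4}, applied with $B$ in the role of ``$A$'' and $f[C]$ in the role of ``$C$'', then yields $\rho(f[C]) = 0$. The bound on $\rho(f^{-1}[D])$ is entirely symmetric: from the sparseness of $D$ together with $b_n \sim n/\beta$ one gets $\lim_n |D \cap b_n|/n = 0$, and $|f^{-1}[D] \cap a_n| = |D \cap b_n|$, so Lemma \ref{lem3.4} (now with $A$ as ``$A$'' and $f^{-1}[D]$ as ``$C$'') gives $\rho(f^{-1}[D]) = 0$.

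The only step that is not pure bookkeeping — and the one I would flag as the crux — is the passage from ``density zero measured along the sparse subsequence $(a_n)$ (resp.\ $(b_n)$)'' to ``density zero in $\omega$''. This is false in general and genuinely uses the slow growth $a_n/a_{n+1} \to 1$ forced by $A$ having positive density, which is exactly what Lemma \ref{lem3.4} supplies. The two-sidedness of the statement is not an obstacle, since both $f$ and $f^{-1}$ are asymptotically density-preserving, so each separately carries a sparse set to a sparse set; and the computable enumerability of $C$ and $D$ plays no role in this particular argument, a bare density-zero hypothesis being enough. (It would, however, become relevant if one weakened the hypothesis to $A$ and $B$ merely computably enumerable, since then the order enumerations are no longer computable and one must instead build $f$ by a stage-by-stage construction that reacts to the enumerations of $C$ and $D$; I would expect that to be the real difficulty in any more general version of the lemma.)
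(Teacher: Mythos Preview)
Your argument is correct, and it is genuinely different from (and cleaner than) the paper's. The paper does \emph{not} take the order isomorphism: it builds $f$ stage by stage from the computable enumerations $\{c_0,c_1,\dots\}$ of $C$ and $\{d_0,d_1,\dots\}$ of $D$, at each stage trying to send $c_s$ to the first still-available $d_j$ and then sending $a_s$ to the first still-available $b_i$. The aim is to force $f[C]$ to land inside $D$ up to a density-zero error, and the authors then invoke Lemma~\ref{lem3.4} to show $f[C]\setminus D$ is sparse. Your observation that the order isomorphism already sends density-zero subsets of $A$ to density-zero subsets of $B$ --- via the identity $|f[C]\cap b_n| = |C\cap a_n|$ together with $a_n\sim n/\alpha$ and Lemma~\ref{lem3.4} --- bypasses the whole construction, and as you correctly point out it does not use the c.e.\ hypothesis on $C$ and $D$ at all. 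The paper's approach would pay off if one actually needed $f[C]$ to coincide with $D$ up to a sparse set, but the application in the next theorem only needs $f[C]$ and $f^{-1}[D]$ sparse, so nothing is lost. One small caution: your parenthetical about replacing $f$ by a back-and-forth isomorphism ``in a way that does not affect the estimates'' is not justified as stated, since the equality $|f[C]\cap b_n| = |C\cap a_n|$ genuinely uses that $f$ is order-preserving; happily, in the intended application the sets $A$ and $B$ carry only the trivial (all-singletons) equivalence relation, so the order isomorphism is already a structural isomorphism and the issue does not arise.
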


\begin{proof} Let $A = \{a_0 < a_1 < \dotsb\}$ and $B = \{b_0 < b_1 < \dotsb\}$. Let $\{c_0,c_1,\dots\}$ be a computable enumeration of $C$ and let $\{d_0,d_1,\dots\}$ be a computable enumeration of $D$, both without repetition.  The goal is to define the map $f$ so that it maps $C$ to $D$ modulo asymptotic density zero. The function $f$ is defined in alternating stages as follows.  Map  $c_0$ to $d_0$.  If $a_0 = c_0$, then, of course, $f(a_0) = d_0$. So suppose $a_0 \neq c_0$.  If $b_0 \neq d_0$, then let $f(a_0) = b_0$ and otherwise let $f(a_0) = b_1$.  

Then at stage $s+1$, we define $f(a_{s+1})$ and $f(c_{s+1})$ as follows. If $f(c_{s+1})$ is not already defined, let $f(c_{s+1}) = d_j$ for the least $j$ such that $d_j$ is still available, that is,  we have not already defined $f(a) = d_j$ for some $a$.  Since we have only defined $s+1$ values of $f$, it follows that $j \leq s+1$.  If $f(a_{s+1})$ is not already defined, let $f(a_{s+1}) = b_i$ for the least $i$ such that $b_i$ is still available and note here that $i \leq s+1$. 

Since $D$ has density zero, it suffices to show that $f[C] \setminus D$ has asymptotic density zero. By Lemma \ref{lem3.4}, it is enough to show that $lim_n |(f[C] \setminus D) \cap b_n|/b_n = 0$. 

It follows from the construction that  
\[
(f[C] \setminus D) \cap b_n \subseteq \{f(a_i): i<n, a_i \in C \}.
\]
It now follows that 
\[ 
|f[C] \setminus D) \cap b_n| \leq |C \cap a_n|,
\]
 and therefore
\[
|f[C] \setminus D) \cap b_n|/b_n  \leq |C \cap a_n|/a_n \cdot a_n/b_n.
\]
Now we saw in the proof of Lemma \ref{lem3.4} that $lim_n n/a_n = \alpha$ if $\{a_0 < a_1 < \dotsb\}$ has asymptotic density $\alpha$, and similarly $lim_n n/b_n = \beta$, so that $lim_n a_n/b_n = \beta/\alpha$.  Since $lim_n |C \cap a_n|/a_n = 0$ and $lim_n a_n/b_n = \beta/\alpha$ exists, it follows that $lim_n |f[C] \setminus D) \cap b_n|/b_n  =  0$, as desired. 

For the other part we have $(f^{-1}[D] \setminus C) \cap a_n \subseteq  \{a_i: i < n, a_i \in C\}$, 

It now follows that 
\[
|f^{-1}[C] \setminus D) \cap a_n| \leq |C \cap a_n|, 
\]
and therefore
\[
|f[C] \setminus D) \cap a_n|/a_n  \leq  \cdot |C \cap a_n|/a_n \cdot a_n/n.
\]
Since $\lim\limits_n |C \cap a_n|/a_n = 0$ and $\lim\limits_n a_n/n = 1/\alpha$ exists, it follows that \[\lim\limits_n |f[C] \setminus D) \cap a_n|/a_n  =  0,\] as desired. 
\end{proof}

\begin{thm} Suppose that $\sA$ and $\sB$ are computable $(1,2)$ equivalence structures with universe $\omega$ such that the asymptotic density of $A(1)$ and $B(1)$ both equal the same computable real $q$.
Then $\sA$ and $\sB$ are weakly coarsely computably isomorphic. 
\end{thm}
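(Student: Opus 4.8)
The plan is to use Theorem~\ref{thm3.4} to replace $\sA$ and $\sB$ by computable $(1,2)$-structures whose type-$1$ and type-$2$ parts are both computable, to connect those two by an \emph{honest} computable isomorphism chosen to respect asymptotic density, and then to compose. First I would dispose of the boundary cases. If $q=1$, then $\sA$ and $\sB$ both have generic character $\{1\}$ and, being $(1,2)$-structures, are isomorphic; the theorem above on structures of generic character $\{1\}$ then makes them coarsely computably isomorphic, and a coarsely computable isomorphism is in particular a weakly coarsely computable one. If $q=0$, then $\sA$ and $\sB$ both have generic character $\{2\}$, so by the theorem above they are generically computably isomorphic; extending the witnessing partial computable map to a total one arbitrarily while keeping the same dense domain $C$ preserves conditions (i)--(iv), so again they are weakly coarsely computably isomorphic. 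Hence we may assume $0<q<1$, so that the elements of type two have density $1-q>0$ in both structures.

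Now I would apply Theorem~\ref{thm3.4} to $\sA$ to obtain a computable $(1,2)$-structure $\sB_{\sA}$ with $\sB_{\sA}(1)$ and $\sB_{\sA}(2)$ computable, $\sB_{\sA}(1)$ of density $q$ (the same as that of $\sA(1)$, since the two relations differ on a set of density zero), whose equivalence relation is contained in and agrees on a density-one set $C_{\sA}$ with that of $\sA$, the witnessing total computable map being the identity; then $C_{\sA}$ is faithful for both structures and the roles of $\sA$ and $\sB_{\sA}$ may be interchanged. Running the same construction on $\sB$ gives $\sB_{\sB}$ and $C_{\sB}$. The structures $\sB_{\sA}$ and $\sB_{\sB}$ are computable $(1,2)$-structures with computable type-parts, and since any two $(1,2)$-structures are isomorphic they are isomorphic, hence computably isomorphic by Lemma~\ref{lem3.3}.

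The key point is to choose the computable isomorphism $g:\sB_{\sA}\to\sB_{\sB}$ so that both $g$ and $g^{-1}$ carry density-zero sets to density-zero sets: let $g$ be the order isomorphism between the computable sets $\sB_{\sA}(1)$ and $\sB_{\sB}(1)$ on the type-$1$ part, and on the type-$2$ part let $g$ carry the $i$-th two-element class of $\sB_{\sA}$ (classes ordered by least element) to the $i$-th two-element class of $\sB_{\sB}$, least element to least element. Since $\sB_{\sA}(1)$ and $\sB_{\sB}(1)$ both have density $q>0$, the estimate in the proof of Lemma~\ref{lem3.4} shows that their $n$-th elements are asymptotically equal, so this order isomorphism preserves density-zero subsets; on the type-$2$ part $g$ moves each point by at most one rank within two sets of density $1-q>0$, so the same holds there, which is exactly the order-preservation already implicit in the construction in the proof of Lemma~\ref{lem3.5}. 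Combining the two parts, $g$ and $g^{-1}$ preserve both density zero and density one.

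To finish, set $C:=C_{\sA}\cap g^{-1}[C_{\sB}]$ and $\theta:=g$, which is total computable; then $C$ has density one (as $C_{\sA}$ and $C_{\sB}$ do and $g^{-1}$ preserves density one), and $\theta[C]=g[C_{\sA}]\cap C_{\sB}$ has density one likewise. For $x,y\in C$, chaining the three structural isomorphisms---the identity from $\sA$ restricted to $C_{\sA}$ into $\sB_{\sA}$, then $g$, then the identity from $\sB_{\sB}$ restricted to $C_{\sB}$ into $\sB$---shows that $x$ and $y$ are equivalent in $\sA$ iff $g(x)$ and $g(y)$ are equivalent in $\sB$, so $\theta$ restricted to $C$ is a structural isomorphism from $\sA$ restricted to $C$ onto $\sB$ restricted to $\theta[C]$; since $\theta=g$ is itself a bijection of $\omega$, taking $f:=\theta$ gives the data $(C,f,\theta)$ witnessing the conclusion. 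I expect the main obstacle to be precisely this density bookkeeping: weak coarse computable isomorphism is not transitive in general, because a computable bijection can inflate a density-zero set to density one, so the whole reduction only works once $g$ and the identity maps coming out of Theorem~\ref{thm3.4} are shown to be asymptotically order-preserving, and verifying this on the type-$2$ part---where classes interleave and one must bound how far ranks shift---is the delicate step, handled by the same kind of estimate as in Lemmas~\ref{lem3.4} and~\ref{lem3.5}.
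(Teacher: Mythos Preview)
Your proof is correct and follows the same overall architecture as the paper's: apply Theorem~\ref{thm3.4} to pass from $\sA$ and $\sB$ to computable structures with computable type-parts (the paper calls them $\sC$ and $\sD$), link those by a computable isomorphism that carries the density-zero error sets to density-zero sets, and then compose.

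The one genuine difference is in how the linking map on the type-$1$ parts is built. The paper invokes Lemma~\ref{lem3.5}, a back-and-forth construction designed to send the designated density-zero error set $\sC(1)\setminus\sA(1)$ onto (a subset of) the error set on the other side, modulo density zero. You instead take the \emph{order} isomorphism between $\sB_{\sA}(1)$ and $\sB_{\sB}(1)$ and argue directly that, since both sets have positive density $q$, their $n$th elements are asymptotically equal and hence the order map preserves density-zero subsets. That argument is correct (indeed it works whenever both sets have any positive density) and is somewhat cleaner than the machinery of Lemma~\ref{lem3.5}; the paper's lemma buys extra generality you do not need here. One minor remark: your assertion that the class-by-class map on the type-$2$ parts ``moves each point by at most one rank'' is not literally true when partners within a class are far apart, but this is harmless for the proof, since the error sets produced by Theorem~\ref{thm3.4} lie entirely inside the type-$1$ parts, and on the type-$2$ side you need nothing beyond a computable isomorphism. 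Finally, you handle the endpoints $q\in\{0,1\}$ explicitly via the earlier theorems on generic character $\{1\}$ and $\{2\}$; the paper's argument tacitly assumes $0<q<1$, since that is the standing hypothesis of Theorem~\ref{thm3.4}.
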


\begin{proof} Let $\sA$, $\sB$ and $q$ be given as above. It follows from the proof of Theorem \ref{thm3.4} that there are computable structures $\sC$ and $\sD$ with universe $\omega$ such that the identity map is a weakly coarsely computable isomorphism between $\sA$ and $\sC$ and also between $\sB$ and $\sD$, with the additional property that $\sC(2) \subseteq \sA(2)$, $\sD(2) \subseteq \sB(2)$,  and both $\sC(1) \setminus \sA(1)$ and $\sD(1) \setminus \sB(1)$ have asymptotic density zero. Now, by Lemma \ref{lem3.5}, there is a computable isomorphism  $g_2: \sC(2) \to \sD(2)$, and a computable isomorphism $g_1$ from $\sC(1)$ to $\sD(1)$ such that  $g_1[\sC(1) \setminus \sA(1)]$ and $g_1^{-1}[\sD(1) \setminus \sB(1)]$ each have asymptotic density zero. Then the desired set isomorphism $g: \sA \to \sB$ is defined as follows.  Given $x \in \sA$, there are two cases.  If $x \in \sC(1)$, then $f(x) = g_1(x)$ and if $x \in \sC(2)$, then $f(x) = g_2(x)$.  Let $E = \sC(2) \cup (\sA(1) \cap G_1^{-1}[B(1)])$.  Then $\omega \setminus E = (\sC(1) \setminus \sA(1)) \cup (g_1^{-1}(D(1) \setminus B(1))$, and therefore has asymptotic density zero, so that $E$ has density one.  At the same time, $\omega \setminus f[E] = (\sD(1) \setminus \sB(1)) \cup g_1[\sC(1) \setminus \sA(1)])$, which has asymptotic density zero, so that $f[E]$ has asymptotic density one and thus $E$ has density one. Let $x,y \in E$.   It follows from the construction of Theorem \ref{thm3.4} that for any $x,y \in E$, $xR_Ay \iff xR_By$. It remains to check that $f$ is an isomorphism on the set $E$.  Let $x,y \in E$.  There are three cases, without loss of generality. 
First note that if $x \in (\sA(1) \cap g_1^{-1}(\sB(1))$, then $x \in \sC(1)$, so that $f(x) = g_1(x)$ and $g_1(x) \in \sB(1))$, and therefore $g(x) \in \sB(1)$.  

Case 1:  $x,y \in \sC(2)$. Then $f(x) = g_2(x)$ and $f(y) = g_2(y)$ and we have 
\[
xR_Ay \iff xR_Cx \iff g_2(x) R_D(x,y) \iff g_2(x) R_B(x,y),
\]
so that $xR_Ax \iff f(x) R_B f(y)$.

Case 2: $x \in \sA(1) \cap g_1^{-1}(\sB(1))$ and $y \in \sC(2)$. Then $y \in \sA(2)$, and therefore $\neg R_A(x,y)$. Now, by the remark above, $f(x) \in \sB(1)$, whereas $f(y) = g_2(y) \in \sD(2) \subseteq \sB(2)$ and therefore $f(y) \in \sB(2)$.  Hence we have   $\neg f(x) R_B f(y)$. 

Case 3:  
$x \neq y$ and both are in  $\sA(1) \cap g_1^{-1}[\sB(1)]$.  Then, since both are in $\sA(1)$, we have $\neg x R_A y$.  
By the remark above, $f(x),f(y) \in \sB(1)$ as well and therefore $\neg f(x) R_B f(y)$.

Thus $f$ acts as an isomorphism on the set $E$ of asymptotic density one.  This completes the proof that $\sA$ and $\sB$ are weakly coarsely computably isomorphic. 
\end{proof}

This result also extends to computable $(1,k)$-structures with all classes of size one or $k$, where $k$ is finite. 
We close with the following conjecture. 

\begin{conj}  Let $K = \{k_1,\dots,k_n\} \subseteq \omega \setminus \{0\}$ be a finite set and let $q_1,\dots,q_n$ be positive reals such that $q_1 + \dotsb + q_n = 1$. Let $\sA$ and $\sB$ be computable equivalence structures such that $\sA(k_i)$ and $\sB(k_i)$ have asymptotic density $q_i$ for each $i$.  Then $\sA$ and $\sB$ are weakly coarsely computably isomorphic. 
\end{conj}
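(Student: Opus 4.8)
The plan is to reduce each side to a computable ``normal form'' at every one of the finitely many class sizes, following the strategy of the proof of Theorem~\ref{thm3.4} but carried out simultaneously for all sizes. Write $K=\{k_1<\dots<k_n\}$ and $\beta_j=q_j+\dots+q_n$, so the threshold set $U_j=\{x:|[x]|\ge k_j\}$ has asymptotic density $\beta_j$ and is computably enumerable in $\sA$ uniformly in $j$, with $\sA(k_j)=U_j\setminus U_{j+1}$ (taking $U_{n+1}=\emptyset$). The first and main step is the following generalization of Theorem~\ref{thm3.4}: \emph{if $\sA$ is a computable equivalence structure whose character is supported on $K$ with $\sA(k_j)$ of density $q_j>0$ for each $j$, then there is a computable $\sC\cong\sA$ with each $\sC(k_j)$ computable of density $q_j$ such that the identity is a weakly coarsely computable isomorphism $\sA\to\sC$}; moreover the ``bad set'' $\{x:[x]_{\sC}\neq[x]_{\sA}\}$ has density zero and each $\sC(k_j)\setminus\sA(k_j)$ is computably enumerable.

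To build $\sC$, let $\sigma_s(x)$ be the size of $[x]_{\sA}$ as observed by stage $s$, a nondecreasing computable approximation to $|[x]_{\sA}|$, and let $\tau_s(x)$ be the least element of $K$ that is $\ge\sigma_s(x)$; then $\tau_s(x)$ increases to $|[x]_{\sA}|$ and always satisfies $\tau_s(x)\le|[x]_{\sA}|$. Choose, as in Theorem~\ref{thm3.4}, computable increasing sequences $(n_i)$ and $(s_i)$ with $n_i\ge 2^i$ and, for every $j\le n$, $|\{x<n_i:\sigma_{s_i}(x)\ge k_j\}|/n_i>\beta_j-2^{-i}$ (such pairs exist and can be found effectively, using that the $\beta_j$ are computable reals). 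At block $i$ we commit the $\sC$-class of every uncommitted $x<n_i$: if the observed class $[x]_{\sA}^{s_i}$ already has size in $K$ we make it a $\sC$-class; otherwise $x$ is dropped into a pool that gets partitioned into classes of size $k_1$, carrying a bounded remainder to the next block. In all cases $\kappa(x):=\tau_{s_i}(x)$ is the size of $[x]_{\sC}$, so $\kappa(x)\le|[x]_{\sA}|$ always; hence $\{x:\kappa(x)\ge k_j\}\subseteq U_j$ and, by the choice of $(n_i,s_i)$ together with boundedness of the character, differs from $U_j$ by a subset of the bad set, which therefore has density zero. Consequently each $\sC(k_j)=\{x:\kappa(x)\ge k_j\}\setminus\{x:\kappa(x)\ge k_{j+1}\}$ is computable of density $q_j$, and $\sC(k_j)\setminus\sA(k_j)=\sC(k_j)\cap U_{j+1}$ is c.e. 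I expect this step to be the main obstacle: the $(1,2)$ case of Theorem~\ref{thm3.4} is easy precisely because there is a single observable threshold (``size $\ge 2$''), whereas here one must manage the whole chain $U_2\supset\dots\supset U_n$ and ensure the mismatches at the various levels still aggregate to density zero; the pool bookkeeping and the computability of the $\sC(k_j)$ also need care.

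Granting this, apply it to $\sA$ to obtain $\sC$ and to $\sB$ to obtain $\sD$. By the analogue of Lemma~\ref{lem3.5} for equivalence structures all of whose classes have a single fixed size $k_j$, applied level by level (using that $\sC(k_j),\sD(k_j)$ are computable of positive density and the error sets $\sC(k_j)\setminus\sA(k_j)$, $\sD(k_j)\setminus\sB(k_j)$ are c.e.\ of density zero), we get computable isomorphisms $g_j:\sC(k_j)\to\sD(k_j)$ with $g_j[\sC(k_j)\setminus\sA(k_j)]$ and $g_j^{-1}[\sD(k_j)\setminus\sB(k_j)]$ of density zero. Let $\theta=\bigcup_j g_j$; since $\{\sC(k_j):j\le n\}$ is a computable partition of $\omega$, $\theta$ is total computable, and it is an isomorphism $\sC\to\sD$ because no class spans two levels. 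Let $E$ be $\omega$ with all the density-zero sets removed — the two bad sets, the sets $\sC(k_j)\setminus\sA(k_j)$, and the $\theta$-preimages of the corresponding sets on the $\sB$ side — then closed under $\sA$-equivalence and shrunk so that $\sA(k_j)\setminus E$ is infinite for each $j$; as the character is bounded and each removed set has density zero, $E$ still has density one and is a union of whole $\sA$-classes.

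For $x\in E$ one verifies, by a case analysis on which $\sA(k_j)$ contains $x$ exactly as in the proof of the corresponding theorem for $(1,2)$-structures, that $[x]_{\sA}=[x]_{\sC}$, that $\theta$ carries $[x]_{\sC}$ bijectively onto $[\theta(x)]_{\sD}=[\theta(x)]_{\sB}$, and hence that the restriction of $\theta$ to $E$ is a structural isomorphism of $\sA$ restricted to $E$ onto $\theta[E]$, with $\theta[E]$ of density one (it omits only density-zero image sets). Finally, since $\sA\cong\sB$ (each has infinitely many classes of every size $k_j$ and no others), and $E$ and $\theta[E]$ are unions of whole classes with isomorphic complements, extend the restriction of $\theta$ to $E$ arbitrarily to a set-isomorphism $f:\sA\to\sB$. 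Then $E$, $f$, and $\theta$ satisfy conditions (i)--(iv), so $\sA$ and $\sB$ are weakly coarsely computably isomorphic; this proves the conjecture under the mild additional hypothesis, already present in the corresponding theorem for a single density $q$, that the densities $q_i$ be computable reals.
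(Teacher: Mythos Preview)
This statement is posed in the paper as a \emph{conjecture}; the authors give no proof. So there is nothing in the paper to compare against, and the question is simply whether your sketch is a proof.

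Your strategy---normalise each side via the identity to a computable copy with all $\sC(k_j)$ computable, then match the copies level by level using an analogue of Lemma~\ref{lem3.5}---is the natural extension of the paper's $(1,2)$ argument (Theorem~\ref{thm3.4} and the theorem following Lemma~\ref{lem3.5}), and you rightly flag the normalisation step as the crux. But the sketch has real gaps. First, the commit rule is internally inconsistent: you assert $\kappa(x)=\tau_{s_i}(x)$ ``in all cases'', yet your pool rule sets $\kappa(x)=k_1$ whenever $\sigma_{s_i}(x)\notin K$, while $\tau_{s_i}(x)$ (the least element of $K$ at or above $\sigma_{s_i}(x)$) can exceed $k_1$; and you do not address what happens when part of the observed class $[x]_{\sA}^{s_i}$ was already committed at an earlier block---impossible in the $(1,2)$ setting, unavoidable for larger $K$. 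Second, the density-zero claim for the bad set is asserted, not argued: your threshold condition on $(n_i,s_i)$ controls the \emph{current} observed sizes of all $x<n_i$, but most such $x$ were committed at earlier blocks using coarser stages $s_{i'}<s_i$, so their $\kappa$-values are based on less information; a genuine argument must bound the misclassifications contributed block by block and sum them. Third, the invoked ``analogue of Lemma~\ref{lem3.5}'' produces a bijection of points, not an isomorphism respecting $k_j$-blocks; to work at the level of class indices you need the error sets $\sC(k_j)\setminus\sA(k_j)$ to be unions of $\sC$-classes, which your pool construction does not ensure at the $k_1$ level, since a single pool class can mix elements from distinct $\sA$-classes of different true sizes.

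Finally, even if all of the above can be repaired, you yourself note that the effective search for $(n_i,s_i)$ requires the $q_i$ to be computable. The paper's $(1,2)$ theorem carries exactly that hypothesis, but the conjecture as stated does not; so a completed version of your argument would establish only the computable-density case, not the conjecture as written.
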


\bibliographystyle{amsplain}
\bibliography{GenComp}
\end{document}